\documentclass[11pt]{article}
\usepackage{amsmath}
\usepackage{amsfonts}
\usepackage{amssymb}
\usepackage{multirow}
\setcounter{page}{1}
\usepackage{graphicx}
\usepackage[colorlinks, citecolor=blue]{hyperref}
\usepackage{tikz}
\setlength{\topmargin}{0in} \setlength{\oddsidemargin}{0cm}
\setlength{\textheight}{22cm} \setlength{\textwidth}{15cm}
\newtheorem{theorem}{\bf Theorem}[section]

\newtheorem{lemma}[theorem]{\bf Lemma}
\newtheorem{prop}[theorem]{\bf Proposition}

\newtheorem{remark}[theorem]{\bf Remark}
\newenvironment{proof}{\noindent{\em Proof:}}{\quad \hfill$\Box$\vspace{2ex}}
\newenvironment{sequation}{\begin{equation}\small}{\end{equation}}
\newenvironment{seqnarray}{\begin{eqnarray}\small}{\end{eqnarray}}

\setcounter{footnote}{0}

\makeatletter

\newcommand{\Rmnum}[1]{\expandafter\@slowromancap\romannumeral #1@}
\makeatother
\numberwithin{equation}{section}

\begin{document}
\thispagestyle{empty}
\begin{center}{\LARGE \bf  Totally Real Flat Minimal Surfaces in Quaternionic Projective Spaces}
\footnote{
{\bf Keywords and Phrases.}
Minimal surfaces, Totally real, Twistor lift, Quaternionic projective spaces.
~{\bf Mathematics Subject Classification (2010).}
53C26,53C42.
}
\end{center}

\begin{center}
Ling He
\footnote{
L. He
\\Center for Applied Mathematics, Tianjin University, Tianjin 300072, P. R. China
\\E-mail: heling@tju.edu.cn
}
and
Xianchao Zhou
\footnote{
X.C. Zhou
\\Department of Applied Mathematics, Zhejiang University of Technology, Hangzhou 310023, P. R. China
\\E-mail: zhouxianch07@zjut.edu.cn
}
\end{center}

\bigskip

\begin{center}
\parbox{12cm}
{\footnotesize{\bf ABSTRACT.}  In this paper, we study totally real minimal surfaces in the quaternionic projective space $\mathbb{H}P^n$.
We prove that the linearly full totally real flat minimal surfaces
of isotropy order $n$ in $\mathbb{H}P^n$ are two surfaces in $\mathbb{C}P^n$, one of which is the Clifford solution,
up to symplectic congruence.}
\end{center}

\bigskip

\section{Introduction}\label{sec1:introduction}

A.Bahy-El-Dien and J.C.Wood have developed
a beautiful and quite complete theory for harmonic two-spheres
in the quaternionic projective space $\mathbb{H}P^n$(cf.\cite{A. Bahy-El-Dien 1991}).
All harmonic two-spheres in $\mathbb{H}P^n$
are generated from quaternionic Frenet pair or quaternionic mixed pair,
which are direct sum of certain harmonic two-spheres in $\mathbb{C}P^{2n+1}$,
by certain flag transforms called forward and backward replacements.
The above theory generates a series of classification results about minimal two-spheres of
constant Gauss curvature in $\mathbb{H}P^n$(cf.\cite{FeiHe},\cite{HeJiao 2014},
\cite{HeJiao 2015},\cite{HeJiao 2015-2}).

It is natural to consider the harmonic
maps from Riemann surfaces of higher genus.
One of such examples is the totally real superconformal minimal tori in $\mathbb{C}P^2$,
H.Ma and Y.J.Ma (cf.\cite{Ma 2005}) described explicitly all these tori by Prym-theta functions.
Generally, there is a family of totally real
flat minimal surfaces in $\mathbb{C}P^n$.
This family was given by K.Kenmotsu (cf.\cite{Kenmotsu 1985})
and J.Bolton and L.M.Woodward (cf.\cite{Bolton-Woodward 1992}) respectively. Later,
G.R.Jensen and R.J.Liao
discovered continuous families of noncongruent flat minimal tori
in $\mathbb{C}P^n$ by this family of minimal surfaces(cf.\cite{Jensen-Liao 1995}).
We will apply this family to characterize totally real flat minimal surfaces in $\mathbb{H}P^n$.

It is well known that Veronese
sequences are a series of harmonic two-spheres
of constant Gauss curvature in $\mathbb{C}P^n$ (cf.\cite{Bando-Ohnita}, \cite{J. Bolton 1988}).
In \cite{HeWang2005}, Y.J.He and C.P.Wang proved that the Veronese
sequences in $\mathbb{R}P^{2m}$ ($2\leq 2m\leq n$) are the only totally real
minimal two-spheres with constant Gauss curvature in $\mathbb{H}P^n$.
In \cite{S.Udagawa 1997}, S.Udagawa studied the classification of harmonic two-tori in $\mathbb{H}P^2$ and $\mathbb{H}P^3$.

In this paper we mainly study totally real minimal surfaces in $\mathbb{H}P^n$.
In section 2, we define quaternionic K\"ahler angle with respect to an
isometric immersion from Riemann surface to $\mathbb{H}P^n$, which
gives a measure of the failure of the immersion
to be a totally complex map or a totally real map.
In section 3, we study totally real minimal surfaces in $\mathbb{H}P^n$ by the method of twistor lift,
and get some properties with respect to harmonic sequence
(see Theorem \ref{thm3-1}, Proposition \ref{prop3-1} and Proposition \ref{prop3-2}).
In section 4, we give a classification theorem about the linearly full totally real flat minimal surfaces of isotropy order $n$ in $\mathbb{H}P^n$ (see Theorem \ref{thm3}).

\section{Surfaces in quaternionic projective spaces}

For any $N=1,2,\ldots,$ let $\left\langle , \right\rangle $ denote
the standard Hermitian inner product on $\mathbb{C}^{N}$ defined by
$\left\langle z, w\right\rangle= z_1\overline w_1 +\ldots+
z_N\overline w_N$, where $z=(z_1, \ldots, z_N)^T, w=(w_1, \ldots,
w_N)^T\in {\mathbb{C}}^N$ and $\bar{}$ denotes complex conjugation.
Let $\mathbb{H}$ denote the division ring of quaternions, i.e.
$$
\mathbb{H}=\left\{a+b\texttt{i}+c\texttt{j}+d\texttt{k}
|a,b,c,d\in\mathbb{R},\texttt{i}^2=\texttt{j}^2=\texttt{k}^2
=\texttt{i}\texttt{j}\texttt{k}=-1
\right\}.
$$
Since $a+b\texttt{i}+c\texttt{j}+d\texttt{k}=(a+b\texttt{i})+(c+d\texttt{i})\texttt{j}$,
then it gives an identification of ${\mathbb{C}}^2$ with $\mathbb{H}$.
Let $n \in\left\{1,2,\cdots \right\} $, we have a corresponding identification
of ${\mathbb{C}}^{2n+2}$ with ${\mathbb{H}}^{n+1}$.
The inner product on ${\mathbb{H}}^{n+1}$ is given by $g_{{\mathbb{H}}^{n+1}}=\text{Re}\left\langle , \right\rangle$.
For any
$z_1+z_2\texttt{j}\in\mathbb{H}$, the left multiplication by $\texttt{j}$ is given by
$\texttt{j}(z_1+z_2\texttt{j})=-\bar{z}_2+\bar{z}_1\texttt{j}$.
Then $\texttt{j}$ induces a conjugate
linear map from ${\mathbb{C}}^{2n+2}$ to ${\mathbb{C}}^{2n+2}$, also denoted by $\texttt{j}$,
i.e.
$$
{\texttt{j}}(z_1, z_2, \ldots,
z_{2n+1}, z_{2n+2})^T= (-\bar{z}_2, \bar{z}_1, \ldots, -\bar{z}_{2n+2},
\bar{z}_{2n+1})^T.
$$
Then ${\texttt{j}}^2=-\mathrm{id}$ where $\mathrm{id}$ denotes
the identity map on ${\mathbb{C}}^{2n+2}$. In fact, for any $v\in
{\mathbb{C}}^{2n+2}$,
$$
{\texttt{j}}v=J_{n+1} \bar{v},
$$
where
$J_{n+1}=diag \underbrace{\left\{ \left( \begin{array} {lr} 0 & -1 \\ 1 & 0 \\
\end{array}\right), \ldots, \left( \begin{array} {lr} 0 & -1 \\ 1 & 0 \\
\end{array}\right)  \right\}}_{n+1}$.
By the above, we immediately have the following lemma (cf. \cite{HeJiao 2014}).
\begin{lemma}\label{lem2-0}
The operator $\texttt{j}$ has the following properties: \\
(i) $\left\langle {\texttt{j}}v,
{\texttt{j}}w\right\rangle=\left\langle w, v\right\rangle$ for all
$v,w\in{\mathbb{C}}^{2n+2}$; \\
(ii) $\left\langle {\texttt{j}}v,
v\right\rangle=0$ for all
$v\in{\mathbb{C}}^{2n+2}$;\\
(iii) $\partial_z \circ {\texttt{j}}=\texttt{j} \circ {\partial}_{\bar{z}}$, \
${\partial}_{\bar{z}} \circ \texttt{j}=\texttt{j} \circ \partial_{z}$;\\
(iv)
$\texttt{j}(\lambda v)=\overline{\lambda} \texttt{j} v$ for any
$\lambda\in{\mathbb{C}}$, $v\in{\mathbb{C}}^{2n+2}$.
\end{lemma}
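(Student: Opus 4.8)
The plan is to reduce every one of the four assertions to the single matrix identity $\texttt{j}v = J_{n+1}\bar v$ already recorded in the text, combined with two elementary algebraic facts about $J_{n+1}$: it is \emph{real} and \emph{orthogonal}, satisfying $J_{n+1}^T J_{n+1} = I$, and it is \emph{antisymmetric}, $J_{n+1}^T = -J_{n+1}$. Both facts are immediate from the $2\times 2$ block $\left(\begin{smallmatrix} 0 & -1 \\ 1 & 0 \end{smallmatrix}\right)$ making up $J_{n+1}$, since that block times its transpose is the identity and its transpose is its own negative. I will also use the convention $\langle z, w\rangle = \bar w^{\,T} z$, which matches the displayed definition $\langle z,w\rangle = \sum_i z_i\bar w_i$.

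I would dispatch (iv) first, as it is pure conjugate linearity: since complex conjugation is conjugate linear and $J_{n+1}$ has real entries, $\texttt{j}(\lambda v) = J_{n+1}\overline{\lambda v} = \bar\lambda\, J_{n+1}\bar v = \bar\lambda\,\texttt{j}v$. For (i), I would compute $\langle \texttt{j}v, \texttt{j}w\rangle = \overline{(J_{n+1}\bar w)}^{\,T}(J_{n+1}\bar v) = w^{\,T} J_{n+1}^T J_{n+1}\bar v = w^{\,T}\bar v = \langle w, v\rangle$, where realness of $J_{n+1}$ removes the conjugation bars and the orthogonality relation collapses the middle factor. For (ii), the same bookkeeping produces $\langle \texttt{j}v, v\rangle = \bar v^{\,T} J_{n+1}\bar v$, and antisymmetry of $J_{n+1}$ forces this quadratic form to vanish (a scalar equal to both its own transpose and its negative must be zero); equivalently one sees the blockwise cancellation $-\bar v_{2k}\bar v_{2k-1} + \bar v_{2k-1}\bar v_{2k} = 0$ pair by pair.

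Part (iii) is the only statement involving calculus rather than linear algebra, and it is where I would take the most care. Here $\texttt{j}$ acts on a smooth $\mathbb{C}^{2n+2}$-valued map, and the essential input is the Wirtinger conjugation rule $\partial_z\bar v = \overline{\partial_{\bar z}v}$. Because $J_{n+1}$ is constant and real it passes freely through the derivative, so $\partial_z(\texttt{j}v) = J_{n+1}\,\partial_z\bar v = J_{n+1}\,\overline{\partial_{\bar z}v} = \texttt{j}(\partial_{\bar z}v)$, which is the first identity; the second follows from the conjugate computation. The only genuine obstacle, such as it is, is notational rather than conceptual: keeping the Hermitian-product convention consistent so that the transposes in (i)--(ii) land on the correct factors, and remembering that it is the \emph{transpose} (not the conjugate transpose) of the antisymmetric $J_{n+1}$ that makes the quadratic form in (ii) vanish.
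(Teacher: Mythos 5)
Your proof is correct and is exactly the argument the paper intends: the paper states the lemma as an immediate consequence of the representation $\texttt{j}v=J_{n+1}\bar v$ (citing an earlier reference rather than writing out details), and your verification via the realness, orthogonality, and antisymmetry of $J_{n+1}$, together with the Wirtinger rule $\partial_z\bar v=\overline{\partial_{\bar z}v}$ for part (iii), is the standard spelled-out version of that. Nothing is missing.
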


The quaternionic projective space $\mathbb{H}P^{n}$ is the set
of all one-dimensional quaternionic subspaces of ${\mathbb{H}}^{n+1}$.
Analogous to the Fubini-Study metric on $\mathbb{C}P^{n}$, $\mathbb{H}P^{n}$ also carries a natural Riemannian metric
denoted by $g_{\mathbb{H}P^n}$. Now we give the canonical quaternionic K\"ahler structure
compatible with $g_{\mathbb{H}P^n}$, locally denoted by
$\left\{\texttt{I},\texttt{J},\texttt{K}\right\}$.
Let
\begin{seqnarray}\label{eq2-1}
\pi:S^{4n+3}\left(\subset \mathbb{C}^{2n+2}\right)&\rightarrow & \mathbb{H}P^n\nonumber\\
q&\mapsto & \pi(q)
\end{seqnarray}
be the Riemannian submersion with fiber $S^3$, i.e., for $q=(z_1,z_2,\cdots,z_{2n+1},z_{2n+2})$,
$\pi(q)=[z_1+z_2 \texttt{j},\cdots,z_{2n+1}+z_{2n+2}\texttt{j}]$,
where $|q|^2=\sum_{\alpha=1}^{2n+2}|z_\alpha|^2=1$.
For any $q\in S^{4n+3}$, the tangent space of fiber is
given by
$$
\mathcal{V}_q=\text{span}_{\mathbb{R}}\left\{\texttt{i}q,~\texttt{j}q,\texttt{k}q\right\}.
$$
We define the horizontal space $\mathcal{H}_q$ by
$$
\mathcal{H}_q=\left(\mathcal{V}_q\right)^{\perp}=
\left\{v\in \mathbb{C}^{2n+2}|\left\langle v,q\right\rangle=\left\langle v,\texttt{j}q\right\rangle=0\right\},
$$
which is isomorphic to $\mathbb{H}^{n}$.
Then
$$
T_qS^{4n+3}=\mathcal{H}_q\oplus \mathcal{V}_q
$$
and the tangent map $\pi_*:\mathcal{H}_q\rightarrow T_{\pi(q)}\mathbb{H}P^n$ is an isometric. For any $X\in T_{\pi(q)}\mathbb{H}P^n$, denote the horizontal lift of $X$ by
$X^{\mathcal{H}}=\left(\pi_*\right)^{-1}(X)$,
\begin{sequation}\label{eq2-2}
\texttt{I}(X)=\pi_*(\texttt{i}X^{\mathcal{H}}),
~\texttt{J}(X)=\pi_*(\texttt{j}X^{\mathcal{H}}),
~\texttt{K}(X)=\pi_*(\texttt{k}X^{\mathcal{H}}).
\end{sequation}
Let $\left\{\omega_{\texttt{I}},\omega_{\texttt{J}},\omega_{\texttt{K}}\right\}$
be the two-forms related to the quaternionic K$\ddot{\text{a}}$hler structure
$\left\{\texttt{I},\texttt{J},\texttt{K}\right\}$, defined as follows
\begin{sequation}\label{eq2-3}
\omega_{\texttt{I}}(X,Y)=g_{\mathbb{H}P^n}(\texttt{I}X,Y),
~\omega_{\texttt{J}}(X,Y)=g_{\mathbb{H}P^n}(\texttt{J}X,Y),
~\omega_{\texttt{K}}(X,Y)=g_{\mathbb{H}P^n}(\texttt{K}X,Y),
\end{sequation}
where $X,Y\in T_{\pi(q)}\mathbb{H}P^n$.
We know every two-form of $\left\{\omega_{\texttt{I}},\omega_{\texttt{J}},\omega_{\texttt{K}}\right\}$
is defined locally, but
\begin{sequation}\label{eq2-4}
\Omega=\omega_{\texttt{I}}\wedge\omega_{\texttt{I}}
+\omega_{\texttt{J}}\wedge\omega_{\texttt{J}}
+\omega_{\texttt{K}}\wedge\omega_{\texttt{K}}
\end{sequation}
is a globally defined, non-degenerate four-form on $\mathbb{H}P^n$(cf.\cite{Salamon 1982},\cite{Besse 1987}, \cite{LiZhang 2005}).
It is usually called the fundamental four-form of $\mathbb{H}P^n$.

Let $\varphi:M\rightarrow \mathbb{H}P^n$ be an isometric immersion from Riemann surface $M$ to $\mathbb{H}P^n$.
Then on some open set $U$ of $M$, $\varphi$ has a natural local lift
$s:U\rightarrow S^{4n+3}$.  The explicit description is that the following diagram commutes:
\begin{equation*}
\begin{tikzpicture}[->,>=stealth,auto,node distance=5em]
\centering
\node (X) {$U\subset M$};
\node (Y) [right of=X] {$\mathbb{H}P^n$};
\node (T) [above of=Y] {$S^{4n+3}$};
\draw (X) -- (Y) node[midway, below] {$\varphi$};
\draw (X) -- (T) node[midway, above] {$s$};
\draw (T) -- (Y) node[midway, right] {$\pi$};
\end{tikzpicture}
\end{equation*}
That is, if
$\varphi=[z_1+z_2 \texttt{j},\cdots,z_{2n+1}+z_{2n+2}\texttt{j}]^T$
with $\sum_{\alpha=1}^{2n+2}|z_{\alpha}|^2=1$, then
\\$s=(z_1,z_2,\cdots,z_{2n+1},z_{2n+2})^T$.
Obviously it satisfies $\varphi=\pi\circ s$.

Let $z=x+\texttt{i}y$ be the local coordinate of $M$ such that
the metric induced by $\varphi$ is given by
\begin{sequation}\label{eq2-5}
g_M=\varphi^*g_{\mathbb{H}P^n}=e^{2u}dzd\bar{z}=e^{2u}(dx^2+dy^2),
\end{sequation}
where $u$ is a real function.
Set $\theta_1=e^udx,~\theta_2=e^udy$, $e_1=e^{-u}\frac{\partial}{\partial x}$,
$e_2=e^{-u}\frac{\partial}{\partial y}$. Then
$\left\{e_1,e_2\right\}$ is a standard orthogonal basis of $TM$,
and $\left\{\theta_1,\theta_2\right\}$ is its dual basis.
For given isometric immersion $\varphi$ and the corresponding
lift $s$, we define a new differential operator $d^{\mathcal{H}}$ called
{\it horizontal differential operator} by
$$
d^{\mathcal{H}}s
=ds
-\left\langle ds,s\right\rangle s
-\left\langle ds,\texttt{j}s\right\rangle \texttt{j}s,
$$
where $d$ is the usual differential operator.
The lift $s$ is called a {\it horizontal lift} if $s$
satisfies $ds=d^{\mathcal{H}}s$, that is,
$\left\langle ds,s\right\rangle
=\left\langle ds,\texttt{j}s\right\rangle=0$.
In the following, all the tangent maps such as $ds,~\pi_*$
are extended by complex linearity.
A straightforward calculation shows
\begin{seqnarray}\label{eq2-6}
\varphi_*e_1
&=&\pi_* s_{*}e_1
=\pi_*\left(s_{*}e_1\right)^{\mathcal{H}}\nonumber\\
&=&e^{-u}\pi_*\left[d^{\mathcal{H}}s\left(\frac{\partial}{\partial z}\right)
+d^{\mathcal{H}}s\left(\frac{\partial}{\partial \bar{z}}\right)\right].
\end{seqnarray}
Similarly, we have
\begin{seqnarray}\label{eq2-7}
\varphi_*e_2
=e^{-u}\pi_*\left[\texttt{i}\left(d^{\mathcal{H}}s\left(\frac{\partial}{\partial z}\right)
-d^{\mathcal{H}}s\left(\frac{\partial}{\partial \bar{z}}\right)\right)\right].
\end{seqnarray}
Since $\varphi$ and $\pi_*\mid_{\mathcal{H}_q}$ are isometric,
then we have by \eqref{eq2-6}
\begin{small}
\begin{eqnarray}\label{eq2-8}
g_M(e_1,e_1)
&=&g_{\mathbb{H}P^n}\left(\varphi_*e_1,\varphi_*e_1\right)\nonumber\\
&=&g_{\mathbb{H}^{n+1}}\left((\pi_*)^{-1}\varphi_*e_1,(\pi_*)^{-1}\varphi_*e_1\right)\nonumber\\
&=&e^{-2u}\textrm{Re}\left\langle d^{\mathcal{H}}s\left(\frac{\partial}{\partial z}\right)
+d^{\mathcal{H}}s\left(\frac{\partial}{\partial \bar{z}}\right),
d^{\mathcal{H}}s\left(\frac{\partial}{\partial z}\right)
+d^{\mathcal{H}}s\left(\frac{\partial}{\partial \bar{z}}\right)\right\rangle\nonumber\\
&=&e^{-2u}\left\{\left|d^{\mathcal{H}}s\left(\frac{\partial}{\partial z}\right)\right|^2
+\left|d^{\mathcal{H}}s\left(\frac{\partial}{\partial \bar{z}}\right)\right|^2
+\left\langle d^{\mathcal{H}}s\left(\frac{\partial}{\partial z}\right),
d^{\mathcal{H}}s\left(\frac{\partial}{\partial \bar{z}}\right)\right\rangle\right.\nonumber\\
&&\left.+\left\langle d^{\mathcal{H}}s\left(\frac{\partial}{\partial \bar{z}}\right),
d^{\mathcal{H}}s\left(\frac{\partial}{\partial z}\right)\right\rangle
\right\}.
\end{eqnarray}
\end{small}
Similarly, we have by \eqref{eq2-6} and \eqref{eq2-7}
\begin{small}
\begin{eqnarray}\label{eq2-9}
g_M(e_2,e_2)
&=&e^{-2u}\left\{\left|d^{\mathcal{H}}s\left(\frac{\partial}{\partial z}\right)\right|^2
+\left|d^{\mathcal{H}}s\left(\frac{\partial}{\partial \bar{z}}\right)\right|^2
-\left\langle d^{\mathcal{H}}s\left(\frac{\partial}{\partial z}\right),
d^{\mathcal{H}}s\left(\frac{\partial}{\partial \bar{z}}\right)\right\rangle\right.\nonumber\\
&&\left.-\left\langle d^{\mathcal{H}}s\left(\frac{\partial}{\partial \bar{z}}\right),
d^{\mathcal{H}}s\left(\frac{\partial}{\partial z}\right)\right\rangle
\right\},
\end{eqnarray}
\end{small}
and
\begin{small}
\begin{eqnarray}\label{eq2-10}
g_M(e_1,e_2)
=e^{-2u}\texttt{i}\left\{\left\langle d^{\mathcal{H}}s\left(\frac{\partial}{\partial z}\right),
d^{\mathcal{H}}s\left(\frac{\partial}{\partial \bar{z}}\right)\right\rangle
-\left\langle d^{\mathcal{H}}s\left(\frac{\partial}{\partial \bar{z}}\right),
d^{\mathcal{H}}s\left(\frac{\partial}{\partial z}\right)\right\rangle
\right\}.
\end{eqnarray}
\end{small}
From $g_M(e_1,e_1)=g_{M}(e_2,e_2)=1$ and $g_{M}(e_1,e_2)=0$, we get by \eqref{eq2-8}-\eqref{eq2-10}
\begin{small}
\begin{eqnarray}\label{eq2-11}
e^{2u}
=\left|d^{\mathcal{H}}s\left(\frac{\partial}{\partial z}\right)\right|^2
+\left|d^{\mathcal{H}}s\left(\frac{\partial}{\partial \bar{z}}\right)\right|^2,
~\left\langle d^{\mathcal{H}}s\left(\frac{\partial}{\partial z}\right),
d^{\mathcal{H}}s\left(\frac{\partial}{\partial \bar{z}}\right)\right\rangle
=0.
\end{eqnarray}
\end{small}
In term of the three two-forms
$\left\{\omega_{\texttt{I}},\omega_{\texttt{J}},\omega_{\texttt{K}}\right\}$,
we can define three angles ${\alpha_1},{\alpha_2},{\alpha_3}$ as follows,
\begin{seqnarray}\label{eq2-13}
\cos{\alpha_1}=\frac{\varphi^*(\omega_{\texttt{I}})}{d\mu_M},
\cos{\alpha_2}=\frac{\varphi^*(\omega_{\texttt{J}})}{d\mu_M},
\cos{\alpha_3}=\frac{\varphi^*(\omega_{\texttt{K}})}{d\mu_M},
\end{seqnarray}
where $d\mu_M=\theta_1\wedge\theta_2$ is the volume form of the metric $g_M$.
Here ${\alpha_1},{\alpha_2},{\alpha_3}$ are locally defined.
But since $
\Omega=\omega_{\texttt{I}}\wedge\omega_{\texttt{I}}
+\omega_{\texttt{J}}\wedge\omega_{\texttt{J}}
+\omega_{\texttt{K}}\wedge\omega_{\texttt{K}}
$
is a globally defined, non-degenerate four-form on $\mathbb{H}P^n$,
then we can define a global function ${\alpha}:M\rightarrow [0,\pi]$ by
\begin{seqnarray}\label{eq2-14}
\cos^2{\alpha}=\frac{\left(\varphi^*(\omega_{\texttt{I}})\right)^2+
\left(\varphi^*(\omega_{\texttt{J}})\right)^2+
\left(\varphi^*(\omega_{\texttt{K}})\right)^2}{\left(d\mu_M\right)^2}.
\end{seqnarray}
We call $\alpha$ the {\it{quaternionic K\"{a}hler angle}}
with respect to the isometric immersion $\varphi$.
Obviously $\cos^2{\alpha}=\cos^2{\alpha_1}+\cos^2{\alpha_2}+\cos^2{\alpha_3}$.
In the following we compute the explicit expression of $\cos^2{\alpha}$.
A straightforward calculation shows
\begin{seqnarray}\label{eq2-15}
\cos{\alpha_1}
&=&\left(\varphi^*\omega_{\texttt{I}}\right)(e_1,e_2)
=\omega_{\texttt{I}}\left(\varphi_*e_1,\varphi_*e_2\right)\nonumber\\
&=&g_{\mathbb{H}P^n}\left(\texttt{I}\varphi_*e_1,\varphi_*e_2\right)
=g_{\mathbb{H}^{n+1}}\left(\texttt{i}\left(\varphi_*e_1\right)^{\mathcal{H}},\left(\varphi_*e_2\right)^{\mathcal{H}}\right)\nonumber\\
&=&e^{-2u}\textrm{Re}\left\langle \texttt{i}\left[d^{\mathcal{H}}s\left(\frac{\partial}{\partial z}\right)
+d^{\mathcal{H}}s\left(\frac{\partial}{\partial \bar{z}}\right)\right],
\texttt{i}\left[d^{\mathcal{H}}s\left(\frac{\partial}{\partial z}\right)
-d^{\mathcal{H}}s\left(\frac{\partial}{\partial \bar{z}}\right)\right]\right\rangle\nonumber\\
&=&\frac{\left|d^{\mathcal{H}}s\left(\frac{\partial}{\partial z}\right)\right|^2
-\left|d^{\mathcal{H}}s\left(\frac{\partial}{\partial \bar{z}}\right)\right|^2}
{\left|d^{\mathcal{H}}s\left(\frac{\partial}{\partial z}\right)\right|^2
+\left|d^{\mathcal{H}}s\left(\frac{\partial}{\partial \bar{z}}\right)\right|^2},
\end{seqnarray}
\begin{seqnarray}\label{eq2-16}
\cos{\alpha_2}
&=&\left(\varphi^*\omega_{\texttt{J}}\right)(e_1,e_2)
=\omega_{\texttt{J}}\left(\varphi_*e_1,\varphi_*e_2\right)\nonumber\\
&=&g_{\mathbb{H}P^n}\left(\texttt{J}\varphi_*e_1,\varphi_*e_2\right)
=g_{\mathbb{H}^{n+1}}\left(\texttt{j}\left(\varphi_*e_1\right)^{\mathcal{H}},\left(\varphi_*e_2\right)^{\mathcal{H}}\right)\nonumber\\
&=&e^{-2u}\textrm{Re}\left\langle \texttt{j}\left[d^{\mathcal{H}}s\left(\frac{\partial}{\partial z}\right)
+d^{\mathcal{H}}s\left(\frac{\partial}{\partial \bar{z}}\right)\right],
\texttt{i}\left[d^{\mathcal{H}}s\left(\frac{\partial}{\partial z}\right)
-d^{\mathcal{H}}s\left(\frac{\partial}{\partial \bar{z}}\right)\right]\right\rangle\nonumber\\
&=&e^{-2u}2\textrm{Re}\left[-\texttt{i}\left\langle
\texttt{j}d^{\mathcal{H}}s\left(\frac{\partial}{\partial \bar{z}}\right),
d^{\mathcal{H}}s\left(\frac{\partial}{\partial z}\right)\right\rangle\right]\nonumber\\
&=&\frac{\texttt{i}\left[\left\langle d^{\mathcal{H}}s\left(\frac{\partial}{\partial {z}}\right),
\texttt{j}d^{\mathcal{H}}s\left(\frac{\partial}{\partial \bar{z}}\right)\right\rangle
-\left\langle \texttt{j}d^{\mathcal{H}}s\left(\frac{\partial}{\partial \bar{z}}\right),
d^{\mathcal{H}}s\left(\frac{\partial}{\partial {z}}\right)\right\rangle\right]}
{\left|d^{\mathcal{H}}s\left(\frac{\partial}{\partial z}\right)\right|^2
+\left|d^{\mathcal{H}}s\left(\frac{\partial}{\partial \bar{z}}\right)\right|^2},
\end{seqnarray}
and
\begin{seqnarray}\label{eq2-17}
\cos{\alpha_3}
&=&\left(\varphi^*\omega_{\texttt{K}}\right)(e_1,e_2)
=\omega_{\texttt{K}}\left(\varphi_*e_1,\varphi_*e_2\right)\nonumber\\
&=&g_{\mathbb{H}P^n}\left(\texttt{K}\varphi_*e_1,\varphi_*e_2\right)
=g_{\mathbb{H}^{n+1}}\left(\texttt{k}\left(\varphi_*e_1\right)^{\mathcal{H}},\left(\varphi_*e_2\right)^{\mathcal{H}}\right)\nonumber\\
&=&e^{-2u}\textrm{Re}\left\langle \texttt{ij}\left[d^{\mathcal{H}}s\left(\frac{\partial}{\partial z}\right)
+d^{\mathcal{H}}s\left(\frac{\partial}{\partial \bar{z}}\right)\right],
\texttt{i}\left[d^{\mathcal{H}}s\left(\frac{\partial}{\partial z}\right)
-d^{\mathcal{H}}s\left(\frac{\partial}{\partial \bar{z}}\right)\right]\right\rangle\nonumber\\
&=&e^{-2u}2\textrm{Re}\left[\left\langle
\texttt{j}d^{\mathcal{H}}s\left(\frac{\partial}{\partial \bar{z}}\right),
d^{\mathcal{H}}s\left(\frac{\partial}{\partial z}\right)\right\rangle\right]\nonumber\\
&=&\frac{\left\langle d^{\mathcal{H}}s\left(\frac{\partial}{\partial {z}}\right),
\texttt{j}d^{\mathcal{H}}s\left(\frac{\partial}{\partial \bar{z}}\right)\right\rangle
+\left\langle \texttt{j}d^{\mathcal{H}}s\left(\frac{\partial}{\partial \bar{z}}\right),
d^{\mathcal{H}}s\left(\frac{\partial}{\partial {z}}\right)\right\rangle}
{\left|d^{\mathcal{H}}s\left(\frac{\partial}{\partial z}\right)\right|^2
+\left|d^{\mathcal{H}}s\left(\frac{\partial}{\partial \bar{z}}\right)\right|^2}.
\end{seqnarray}
Then we get by \eqref{eq2-15}-\eqref{eq2-17}
\begin{seqnarray}\label{eq2-18}
\cos^2{\alpha}
=\frac{\left(\left|d^{\mathcal{H}}s\left(\frac{\partial}{\partial z}\right)\right|^2
-\left|d^{\mathcal{H}}s\left(\frac{\partial}{\partial \bar{z}}\right)\right|^2\right)^2
+4\left|\left\langle d^{\mathcal{H}}s\left(\frac{\partial}{\partial {z}}\right),
\texttt{j}d^{\mathcal{H}}s\left(\frac{\partial}{\partial \bar{z}}\right)\right\rangle\right|^2}
{\left(\left|d^{\mathcal{H}}s\left(\frac{\partial}{\partial z}\right)\right|^2
+\left|d^{\mathcal{H}}s\left(\frac{\partial}{\partial \bar{z}}\right)\right|^2\right)^2}.
\end{seqnarray}
From \eqref{eq2-18}, we find
\begin{small}
$$
\cos^2{\alpha}=1-4
\frac{\left|d^{\mathcal{H}}s\left(\frac{\partial}{\partial z}\right)\right|^2
\left|d^{\mathcal{H}}s\left(\frac{\partial}{\partial \bar{z}}\right)\right|^2
-\left|\left\langle d^{\mathcal{H}}s\left(\frac{\partial}{\partial {z}}\right),
\texttt{j}d^{\mathcal{H}}s\left(\frac{\partial}{\partial \bar{z}}\right)\right\rangle\right|^2}
{\left(\left|d^{\mathcal{H}}s\left(\frac{\partial}{\partial z}\right)\right|^2
+\left|d^{\mathcal{H}}s\left(\frac{\partial}{\partial \bar{z}}\right)\right|^2\right)^2}
\leq 1.
$$
\end{small}

We know there are two kinds of typical immersed surfaces in $\mathbb{H}P^n$: totally complex
 and totally real(cf. \cite{S.Funabashi 1978}\cite{S.Funabashi 1979}\cite{K.Tsukada 1985}).
Let $\varphi:M\rightarrow \mathbb{H}P^n$ be an isometric immersion of a Riemann surface $M$
into $\mathbb{H}P^n$.
Then {\it{quaternionic K\"{a}hler angle}} gives a measure of the failure of $\varphi$
to be a totally complex map or a totally real map.
Indeed $\varphi$ is totally complex if and only if $\alpha(p)=0$ for all $p\in M$,
while $\varphi$ is totally real if and only if $\alpha(p)=\frac{\pi}{2}$ for all $p\in M$.
From the above discussions, we get the following conclusion,
which firstly appeared in (\cite{HeWang2005}, Lemma 1).
\begin{lemma}
Let $\varphi:M\rightarrow \mathbb{H}P^n$ be a totally real isometric immersion.
Let $s$ be a local lift of $\varphi$ satisfying $|s|=1$.
Then
\begin{sequation}\label{eq+0}
\left|d^{\mathcal{H}}s\left(\frac{\partial}{\partial z}\right)\right|^2
=\left|d^{\mathcal{H}}s\left(\frac{\partial}{\partial \bar{z}}\right)\right|^2,
~\left\langle d^{\mathcal{H}}s\left(\frac{\partial}{\partial {z}}\right),
\texttt{j}d^{\mathcal{H}}s\left(\frac{\partial}{\partial \bar{z}}\right)\right\rangle=0.
\end{sequation}
\label{lem2-1}
\end{lemma}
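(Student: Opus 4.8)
The plan is to read both identities off directly from the explicit formula for $\cos^2\alpha$ already established in \eqref{eq2-18}, combined with the characterization of totally real immersions recorded just above the statement. Since $\varphi$ is totally real, by definition $\alpha(p)=\frac{\pi}{2}$ for every $p\in M$, and therefore $\cos^2\alpha\equiv 0$ on all of $M$.

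First I would observe that the denominator appearing in \eqref{eq2-18} is strictly positive: by the first relation in \eqref{eq2-11} it equals $(e^{2u})^2>0$, because $\varphi$ is an isometric immersion. Consequently $\cos^2\alpha=0$ is equivalent to the vanishing of the numerator in \eqref{eq2-18}, namely
\begin{sequation}\label{eq+num}
\left(\left|d^{\mathcal{H}}s\left(\frac{\partial}{\partial z}\right)\right|^2
-\left|d^{\mathcal{H}}s\left(\frac{\partial}{\partial \bar{z}}\right)\right|^2\right)^2
+4\left|\left\langle d^{\mathcal{H}}s\left(\frac{\partial}{\partial {z}}\right),
\texttt{j}d^{\mathcal{H}}s\left(\frac{\partial}{\partial \bar{z}}\right)\right\rangle\right|^2=0.
\end{sequation}
The key point is that the left-hand side of \eqref{eq+num} is a sum of two manifestly non-negative real quantities: the first summand is the square of the real number $\left|d^{\mathcal{H}}s(\partial/\partial z)\right|^2-\left|d^{\mathcal{H}}s(\partial/\partial \bar z)\right|^2$, while the second is four times the squared modulus of a complex scalar. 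A sum of non-negative reals can vanish only if each term vanishes separately. Forcing the first square to be zero gives $\left|d^{\mathcal{H}}s(\partial/\partial z)\right|^2=\left|d^{\mathcal{H}}s(\partial/\partial \bar z)\right|^2$, and forcing the second to be zero gives $\left\langle d^{\mathcal{H}}s(\partial/\partial z),\texttt{j}\,d^{\mathcal{H}}s(\partial/\partial \bar z)\right\rangle=0$; these are exactly the two asserted identities \eqref{eq+0}.

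I expect essentially no obstacle here once \eqref{eq2-18} is in hand, since the lemma is the immediate algebraic consequence of $\cos\alpha=0$. The only step deserving a moment of care is confirming that \emph{both} terms in \eqref{eq+num} are genuinely non-negative: this rests on the first difference being real-valued, so that its square is non-negative, and on the second being a squared modulus. The substantive content of the statement therefore lies entirely in the preceding computation of $\cos^2\alpha$, and the proof reduces to extracting the two vanishing conditions from that formula.
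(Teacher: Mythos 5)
Your argument is correct and is exactly the one the paper intends: the lemma is stated as an immediate consequence of ``the above discussions,'' namely the formula \eqref{eq2-18} for $\cos^{2}\alpha$, the positivity of its denominator via \eqref{eq2-11}, and the fact that a sum of non-negative terms vanishes only termwise once $\alpha\equiv\frac{\pi}{2}$ forces $\cos^{2}\alpha\equiv 0$. There is no substantive difference between your proof and the paper's.
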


\section{Totally real minimal surfaces}
We suppose that $(M,~ds^2_M)$ is a simply connected domain in the complex plane $\mathbb{C}$
with local coordinate $\{z\}$. Denote
$$\partial_z=\frac{\partial}{\partial z},
\quad {\partial}_{\bar{z}}=\frac{\partial} {\partial\overline{z}}.$$
The following lemmas will be used in this section.
\begin{lemma}\label{lem3-01}
A map $[s]:M\rightarrow \mathbb{C}P^{2n+1}$ is an isometric minimal immersion
satisfying $|s|^2=1$
if and only if
\begin{sequation}\label{lem3-01-1}
\partial_{\bar{z}}\partial_z s
-\left\langle\partial_{\bar{z}}\partial_z s,s\right\rangle s
-\left\langle\partial_{z}s,s\right\rangle \partial_{\bar{z}}s
-\left\langle\partial_{\bar{z}}s,s\right\rangle \partial_{z}s
-2\left|\left\langle\partial_{z}s,s\right\rangle\right|^2s
=0
\end{sequation}
holds.
\end{lemma}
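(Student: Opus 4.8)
The plan is to reduce the statement to the harmonic map equation for $f:=[s]$ and then to check that, written in the unit lift $s$, this equation is exactly \eqref{lem3-01-1}. First I would observe that since the metric $ds^2_M$ is compatible with the complex structure of $M$, an isometric immersion $[s]$ is automatically conformal, and a conformal immersion into the K\"ahler manifold $\mathbb{C}P^{2n+1}$ is minimal if and only if it is harmonic; so it suffices to characterise harmonicity of $f$. I identify $T^{1,0}_{[s]}\mathbb{C}P^{2n+1}$ with $s^{\perp}=\{v\in\mathbb{C}^{2n+2}:\langle v,s\rangle=0\}$ via the Hopf submersion $S^{4n+3}\to\mathbb{C}P^{2n+1}$, whose horizontal projection is $\pi_{s^{\perp}}(v)=v-\langle v,s\rangle s$ (here I use $|s|^2=1$). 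Under this identification the $(1,0)$-part of $df$ is represented by $\psi:=\pi_{s^{\perp}}(\partial_z s)=\partial_z s-\langle \partial_z s,s\rangle s$.

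Next I would record the Fubini--Study connection in this frame, which is the step I expect to demand the most care. The tautological line bundle $L=[s]$ carries the connection $\nabla^{L}s=\langle \partial s,s\rangle s$ induced from the trivial bundle, so on $T^{1,0}\mathbb{C}P^{2n+1}=L^{*}\otimes L^{\perp}$ the pulled-back Levi--Civita connection acts on a section $\xi\in\Gamma(s^{\perp})$ by
\[
\nabla_{\partial_{\bar z}}\xi=\pi_{s^{\perp}}(\partial_{\bar z}\xi)-\langle \partial_{\bar z}s,s\rangle\,\xi ,
\]
the final term being the gauge contribution of the dual factor $L^{*}$; pinning down this term (and its sign) is the crux of the argument. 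Because $\mathbb{C}P^{2n+1}$ is K\"ahler and $M$ is a surface, the tension field $\tau(f)$ is real, so $f$ is harmonic precisely when the $(1,0)$-part of $\tau(f)$ vanishes, i.e. when $\nabla_{\partial_{\bar z}}\psi=0$.

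Finally I would carry out the remaining algebra. Substituting $\psi=\partial_z s-\langle \partial_z s,s\rangle s$, differentiating in $\bar z$, applying $\pi_{s^{\perp}}$ and subtracting $\langle \partial_{\bar z}s,s\rangle\,\psi$, I collect terms using the normalisation identity obtained from $\partial_z|s|^2=0$, namely $\langle \partial_z s,s\rangle=-\overline{\langle \partial_{\bar z}s,s\rangle}$, whence $\langle \partial_z s,s\rangle\langle \partial_{\bar z}s,s\rangle=-|\langle \partial_z s,s\rangle|^2$. This should collapse $\nabla_{\partial_{\bar z}}\psi$ to precisely the left-hand side of \eqref{lem3-01-1}, yielding the desired equivalence. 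As a built-in consistency check I would verify $\langle \nabla_{\partial_{\bar z}}\psi,s\rangle=0$: this confirms that the two scalar terms $-\langle \partial_{\bar z}\partial_z s,s\rangle s$ and $-2|\langle \partial_z s,s\rangle|^2 s$ are exactly those needed to keep the tension tangent to $\mathbb{C}P^{2n+1}$ (that is, in $s^{\perp}$), and it simultaneously guards against a sign slip in the gauge term.
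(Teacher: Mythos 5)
Your argument is correct, but it takes a genuinely different route from the paper's. You compute the $(1,0)$-part of the tension field intrinsically, via the identification of $T^{1,0}\mathbb{C}P^{2n+1}$ with $\mathrm{Hom}(L,L^{\perp})$ for the tautological bundle $L$ and the induced Chern connection, so that minimality of the conformal immersion $[s]$ becomes $\nabla_{\partial_{\bar z}}\psi=0$ with $\psi=\partial_z s-\left\langle\partial_z s,s\right\rangle s$. Your connection formula $\nabla_{\partial_{\bar z}}\xi=\pi_{s^{\perp}}(\partial_{\bar z}\xi)-\left\langle\partial_{\bar z}s,s\right\rangle\xi$ has the correct gauge term and sign, and the algebra you defer does close: expanding gives
\begin{equation*}
\nabla_{\partial_{\bar z}}\psi=\partial_{\bar z}\partial_z s-\left\langle\partial_{\bar z}\partial_z s,s\right\rangle s-\left\langle\partial_z s,s\right\rangle\partial_{\bar z}s-\left\langle\partial_{\bar z}s,s\right\rangle\partial_z s+2\left\langle\partial_z s,s\right\rangle\left\langle\partial_{\bar z}s,s\right\rangle s,
\end{equation*}
and the identity $\left\langle\partial_z s,s\right\rangle=-\overline{\left\langle\partial_{\bar z}s,s\right\rangle}$ coming from $|s|^2=1$ turns the last coefficient into $-2\left|\left\langle\partial_z s,s\right\rangle\right|^2$, which is exactly \eqref{lem3-01-1}. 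The paper instead realizes $\mathbb{C}P^{2n+1}$ as a totally geodesic submanifold of $U(2n+2)$ via the Cartan imbedding $\tau(f)=2f-I$ with $f=ss^*$, invokes Uhlenbeck's formulation of harmonicity ${\partial}_{\bar{z}}A_z=[A_z,A_{\bar{z}}]$ for the composed map, and extracts \eqref{lem3-01-1} from a matrix identity. What the paper's route buys is uniformity: the identical device, applied to $\varphi=ss^*+(\texttt{j}s)(\texttt{j}s)^*$, yields the quaternionic analogue in Lemma \ref{lem3-03}, whereas your computation is tied to the tautological-bundle description of $T^{1,0}\mathbb{C}P^{2n+1}$ and would need reworking for $\mathbb{H}P^n$. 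What your route buys is transparency and a built-in safeguard: each term of \eqref{lem3-01-1} is visibly either an $s^{\perp}$-projection term or the $L^{*}$-gauge contribution, and the check $\left\langle\nabla_{\partial_{\bar z}}\psi,s\right\rangle=0$ (which indeed holds) protects against sign slips. Note finally that both proofs share the same mild imprecision in the converse direction: \eqref{lem3-01-1} encodes only harmonicity, so the hypothesis that $[s]$ is a conformal (isometric) immersion must be carried as a standing assumption for the stated equivalence with minimality; this is not a defect of your argument relative to the paper's.
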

\begin{proof}
Let $s$ be a column vector valued in $\mathbb{C}P^{2n+1}$ satisfying $|s|^2=1$. Set $f=ss^*$, where $*$ denotes conjugate transpose.
We consider $\mathbb{C}P^{2n+1}$ as a totally geodesic submanifold
of $U(2n+2)$ by Cartan imbedding $\tau(f)=2f-I\in U(2n+2)$ for all
$f\in \mathbb{C}P^{2n+1}$.
Here the bi-invariant metric on $U(n+1)$ is given by
$ds_{U(n+1)}^2=\frac{1}{8}\text{tr}\omega\omega^*$,
where $\omega=g^{-1}dg$ is the Maurer-Cartan form of $U(n+1)$.
The metric on $\mathbb{C}P^{2n+1}$ induced by $\tau$ is the Fubini-Study
metric of constant holomorphic sectional curvature $4$.

Since $\tau$ is totally geodesic, $[s]$ is an isometric minimal immersion
if and only if $\tau\circ [s]$ is an isometric minimal immersion, if and only if the following equation
\begin{sequation}\label{lem3-01-2}
{\partial}_{\bar{z}}A_z=\left[A_z,A_{\bar{z}}\right]
\end{sequation}
holds (cf.\cite{K. Uhlenbeck 1989}), where $A_z=(2f-I)\partial_z f$,
$A_{\bar{z}}=(2f-I)\partial_{\bar{z}} f$, and
$\left[,\right]$ is Lie bracket of $\text{gl}(2n+2,\mathbb{C})$.

Let $s^{\perp}(\partial_z s)=\partial_z s-\left\langle\partial_z s,~s\right\rangle s$, $s^{\perp}(\partial_{\bar{z}} s)=\partial_{\bar{z}} s-\left\langle\partial_{\bar{z}} s,~s\right\rangle s$.
Then,
\begin{sequation}\label{lem3-01-3}
A_z=s\left[s^{\perp}(\partial_{\bar{z}} s)\right]^*-
\left[s^{\perp}(\partial_{z} s)\right]s^*,
~A_{\bar{z}}=s\left[s^{\perp}(\partial_{z} s)\right]^*-
\left[s^{\perp}(\partial_{\bar{z}} s)\right]s^*.
\end{sequation}
Substituting \eqref{lem3-01-3} into \eqref{lem3-01-2},
we obtain
\begin{seqnarray}\label{lem3-01-5}
&&(\partial_{\bar{z}} s)\left[s^{\perp}(\partial_{\bar{z}} s)\right]^*
+s\left[\partial_{z}s^{\perp}(\partial_{\bar{z}} s)\right]^*
-\partial_{\bar{z}}\left[s^{\perp}(\partial_{z} s)\right]s^*
-\left[s^{\perp}(\partial_{z} s)\right]\left(\partial_{z} s\right)^*
+\left|s^{\perp}(\partial_{\bar{z}} s)\right|^2ss^*
\nonumber\\
&&+\left[s^{\perp}(\partial_{z} s)\right]\left[s^{\perp}(\partial_{z} s)\right]^*
-\left|s^{\perp}(\partial_{z} s)\right|^2ss^*
-\left[s^{\perp}(\partial_{\bar{z}} s)\right]\left[s^{\perp}(\partial_{\bar{z}} s)\right]^*=0,
\end{seqnarray}
which is equivalent to
\begin{seqnarray}\label{lem3-01-6}
&&
\partial_{\bar{z}}\left[s^{\perp}(\partial_{z} s)\right]
-\left\langle\partial_{\bar{z}} s,~s\right\rangle\left[s^{\perp}(\partial_{z} s)\right]
+\left|s^{\perp}(\partial_{z} s)\right|^2s=0.
\end{seqnarray}
Thus \eqref{lem3-01-1} follows from \eqref{lem3-01-6}.
\end{proof}

\begin{lemma}\label{lem3-02}
A map $[s]:M\rightarrow \mathbb{C}P^{2n+1}$ is a totally real isometric immersion
satisfying $|s|^2=1$
if and only if
\begin{sequation}\label{lem3-02-1}
\left|\partial_z s\right|^2
=\left|\partial_{\bar{z}}s\right|^2
\end{sequation}
holds.
\end{lemma}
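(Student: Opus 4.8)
The plan is to characterize the totally real condition through the pullback of the Fubini--Study K\"ahler form and then rewrite everything in terms of the full derivatives $\partial_z s,\partial_{\bar z}s$. Throughout, write $f=[s]$ and let $s^{\perp}(v)=v-\langle v,s\rangle s$ denote orthogonal projection onto the complex hyperplane $s^{\perp}=\{v\in\mathbb{C}^{2n+2}:\langle v,s\rangle=0\}$. This is exactly the horizontal space at $s$ for the Hopf fibration $S^{4n+3}\to\mathbb{C}P^{2n+1}$, on which $\pi_{*}$ is a complex-linear isometry; the relevant complex structure is now the single one induced by the ambient multiplication by $\texttt{i}$, so the situation is the $\mathbb{C}P^{2n+1}$-analogue of the operator $\texttt{I}$ used in Section 2.

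First I would record the differential of $f$. Repeating the computation leading to \eqref{eq2-6}--\eqref{eq2-7}, but with $s^{\perp}$ replacing the $\mathbb{H}P^n$-horizontal projection, exhibits $f_{*}e_1$ and $f_{*}e_2$ (up to the conformal factor $e^{-u}$) as the images under $\pi_{*}$ of $s^{\perp}(\partial_z s)+s^{\perp}(\partial_{\bar z}s)$ and $\texttt{i}\big(s^{\perp}(\partial_z s)-s^{\perp}(\partial_{\bar z}s)\big)$, so that the $(1,0)$-part of $df$ corresponds to $s^{\perp}(\partial_z s)$ and the $(0,1)$-part to $s^{\perp}(\partial_{\bar z}s)$. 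Exactly as in the derivation of $\cos\alpha_1$ in \eqref{eq2-15}, the K\"ahler angle $\theta$ of $f$ in $\mathbb{C}P^{2n+1}$ then satisfies
\[
\cos\theta=\frac{\big|s^{\perp}(\partial_z s)\big|^{2}-\big|s^{\perp}(\partial_{\bar z}s)\big|^{2}}{\big|s^{\perp}(\partial_z s)\big|^{2}+\big|s^{\perp}(\partial_{\bar z}s)\big|^{2}},
\]
the denominator equalling the induced conformal factor $e^{2u}>0$ since $f$ is an immersion. As $f$ is totally real precisely when $\theta=\tfrac{\pi}{2}$, i.e. when the pullback of the K\"ahler form vanishes, totally realness is equivalent to $\big|s^{\perp}(\partial_z s)\big|^{2}=\big|s^{\perp}(\partial_{\bar z}s)\big|^{2}$.

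It then remains to pass from the projected norms to the full ones. Decomposing orthogonally, $|\partial_z s|^{2}=\big|s^{\perp}(\partial_z s)\big|^{2}+|\langle\partial_z s,s\rangle|^{2}$, and likewise for $\partial_{\bar z}s$. Differentiating the constraint $|s|^{2}=1$ gives $\langle\partial_z s,s\rangle=-\langle s,\partial_{\bar z}s\rangle$, hence $|\langle\partial_z s,s\rangle|=|\langle\partial_{\bar z}s,s\rangle|$; subtracting the two decompositions yields $|\partial_z s|^{2}-|\partial_{\bar z}s|^{2}=\big|s^{\perp}(\partial_z s)\big|^{2}-\big|s^{\perp}(\partial_{\bar z}s)\big|^{2}$. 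Combining this with the previous paragraph gives the claimed equivalence: the isometric immersion $f$ is totally real if and only if $|\partial_z s|^{2}=|\partial_{\bar z}s|^{2}$.

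The step requiring the most care is the identification of the K\"ahler-angle formula, i.e. correctly matching $s^{\perp}(\partial_z s)$ and $s^{\perp}(\partial_{\bar z}s)$ with the holomorphic and antiholomorphic parts of $df$ and fixing the sign in the pullback of the K\"ahler form; once this is arranged to mirror the $\texttt{I}$-computation of Section 2, the remaining algebra---in particular the reduction of projected to full derivatives via $|s|^{2}=1$---is routine.
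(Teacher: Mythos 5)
Your proposal is correct and follows essentially the same route as the paper: characterize totally real via the K\"ahler angle being $\tfrac{\pi}{2}$, express that angle in terms of the projected derivatives $s^{\perp}(\partial_z s)$, $s^{\perp}(\partial_{\bar z}s)$, and then pass to the full derivatives using $|s|^2=1$. The only difference is cosmetic --- the paper simply quotes the $\tan\frac{\theta}{2}$ formula from Bolton--Jensen--Rigoli--Woodward where you re-derive the equivalent $\cos\theta$ expression, and you spell out the projected-to-full reduction (via $\langle\partial_z s,s\rangle=-\langle s,\partial_{\bar z}s\rangle$) that the paper compresses into ``by $|s|^2=1$.''
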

\begin{proof}
The K\"ahler angle of $[s]$ is a function $\theta:M\rightarrow [0,\pi]$ given
in terms of the complex coordinate $z$ on $M$ by (cf. \cite{J. Bolton 1988})
$
\tan\frac{\theta}{2}=\frac{\left|s^{\perp}(\partial_{\bar{z}} s)\right|}{\left|s^{\perp}(\partial_{z} s)\right|}.
$
Since $[s]$ is totally real, then $\theta=\frac{\pi}{2}$, which implies
$
\left|s^{\perp}(\partial_{\bar{z}} s)\right|^2=\left|s^{\perp}(\partial_{z} s)\right|^2.
$
It is equivalent to \eqref{lem3-02-1} by $|s|^2=1$.
\end{proof}

Similar to Lemma \ref{lem3-01}, we have the following conclusion, which firstly appeared in (\cite{HeWang2005}, Proposition 1).
\begin{lemma}\label{lem3-03}
Let $\varphi:M\rightarrow \mathbb{H}P^n$ be an isometric immersion.
Let $s$ be a local lift of $\varphi$ satisfying $|s|=1$,
then $\varphi$ is minimal
if and only if
\begin{small}
\begin{equation}\label{lem3-03-1}
\partial_{\bar{z}}\left[d^{\mathcal{H}}s({\partial}_z)\right]
-\left\langle\partial_{\bar{z}}s,s\right\rangle d^{\mathcal{H}}s({\partial}_z)
-\left\langle\partial_{\bar{z}}s,\texttt{j}s\right\rangle \texttt{j}d^{\mathcal{H}}s({\partial}_{\bar{z}})
+|d^{\mathcal{H}}s({\partial}_z)|^2s
+\left\langle d^{\mathcal{H}}s({\partial}_z),
\texttt{j}d^{\mathcal{H}}s({\partial}_{\bar{z}})\right\rangle\texttt{j}s
=0
\end{equation}
\end{small}
holds.
\end{lemma}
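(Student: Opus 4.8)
The plan is to mimic the proof of Lemma \ref{lem3-01}, replacing the rank-one Hermitian projection $f=ss^*$ attached to a point of $\mathbb{C}P^{2n+1}$ by the rank-two projection onto the quaternionic line determined by $\varphi$. Concretely, I would set
$$
F=ss^*+(\texttt{j}s)(\texttt{j}s)^*,
$$
which, since $\langle s,\texttt{j}s\rangle=0$ and $|s|=|\texttt{j}s|=1$ by Lemma \ref{lem2-0}, is the orthogonal projection of $\mathbb{C}^{2n+2}$ onto $\mathrm{span}_{\mathbb{C}}\{s,\texttt{j}s\}=\mathbb{H}s$; thus $F^*=F$, $F^2=F$ and $\mathrm{tr}\,F=2$. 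A structural fact I would record at once is that $\texttt{j}$ commutes with $F$: using Lemma \ref{lem2-0}(i),(iv) and $\texttt{j}^2=-\mathrm{id}$ one verifies $\texttt{j}F=F\texttt{j}$, whence $\texttt{j}(I-F)=(I-F)\texttt{j}$. Since $I-F$ is exactly the horizontal projection, this gives the two identities I shall lean on, $d^{\mathcal{H}}s(\partial_z)=(I-F)\partial_z s$ and, via Lemma \ref{lem2-0}(iii),
$$
\texttt{j}\,d^{\mathcal{H}}s(\partial_{\bar z})=\texttt{j}(I-F)\partial_{\bar z}s=(I-F)\texttt{j}\partial_{\bar z}s=(I-F)\partial_z(\texttt{j}s).
$$

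Next I would reduce minimality to a zero-curvature equation. The map $2F-I$ is an involution whose $(+1)$-eigenspace is the two-dimensional $\texttt{j}$-invariant space $\mathbb{H}s$; since $\texttt{j}F=F\texttt{j}$, the operator $2F-I$ commutes with $\texttt{j}$ and so lies in $Sp(n+1)\subset U(2n+2)$, and $\varphi\mapsto 2F-I$ is the Cartan embedding realizing $\mathbb{H}P^n$ as a totally geodesic submanifold of $U(2n+2)$. As $\varphi$ is a given isometric immersion, $\varphi$ is minimal if and only if it is harmonic, and because the embedding is totally geodesic this holds if and only if $\Psi:=2F-I$ is harmonic as a map into $U(2n+2)$. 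Exactly as in Lemma \ref{lem3-01}, by \cite{K. Uhlenbeck 1989} this is
$$
\partial_{\bar z}\mathcal{A}_z=[\mathcal{A}_z,\mathcal{A}_{\bar z}],\qquad
\mathcal{A}_z=(2F-I)\partial_z F,\quad \mathcal{A}_{\bar z}=(2F-I)\partial_{\bar z}F.
$$

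The computational step is then to expand everything in terms of $s,\texttt{j}s$ and their horizontal derivatives. Differentiating $F$ and rewriting $\partial_z(\texttt{j}s)=\texttt{j}\partial_{\bar z}s$, $\partial_z(s^*)=(\partial_{\bar z}s)^*$ and $\partial_z((\texttt{j}s)^*)=(\texttt{j}\partial_z s)^*$ by Lemma \ref{lem2-0}(iii) yields
$$
\partial_z F=(\partial_z s)s^*+s(\partial_{\bar z}s)^*+(\texttt{j}\partial_{\bar z}s)(\texttt{j}s)^*+(\texttt{j}s)(\texttt{j}\partial_z s)^*.
$$
Writing $2F-I=F-(I-F)$ and using $Fs=s$, $F(\texttt{j}s)=\texttt{j}s$, I would reduce $\mathcal{A}_z$ to a sum of terms of the shape $[d^{\mathcal{H}}s(\cdot)]s^*$, $[\texttt{j}\,d^{\mathcal{H}}s(\cdot)](\texttt{j}s)^*$, $s[\cdots]^*$ and $(\texttt{j}s)[\cdots]^*$, exactly paralleling \eqref{lem3-01-3} but with a $\texttt{j}$-twisted second half coming from the $(\texttt{j}s)(\texttt{j}s)^*$ summand. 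Substituting into the zero-curvature equation produces an endomorphism-valued identity analogous to \eqref{lem3-01-5}; discarding the many terms that vanish by the built-in orthogonalities $\langle d^{\mathcal{H}}s(\partial_z),s\rangle=\langle d^{\mathcal{H}}s(\partial_z),\texttt{j}s\rangle=0$ and then reading off its vector content, as in the passage \eqref{lem3-01-5}$\Rightarrow$\eqref{lem3-01-6}, should give \eqref{lem3-03-1}. In this extraction the $ss^*$-part reproduces the $\mathbb{C}P^{2n+1}$ terms $\partial_{\bar z}[d^{\mathcal{H}}s(\partial_z)]-\langle\partial_{\bar z}s,s\rangle d^{\mathcal{H}}s(\partial_z)+|d^{\mathcal{H}}s(\partial_z)|^2s$, while the $(\texttt{j}s)(\texttt{j}s)^*$-part contributes precisely the two extra terms $-\langle\partial_{\bar z}s,\texttt{j}s\rangle\,\texttt{j}\,d^{\mathcal{H}}s(\partial_{\bar z})$ and $\langle d^{\mathcal{H}}s(\partial_z),\texttt{j}\,d^{\mathcal{H}}s(\partial_{\bar z})\rangle\,\texttt{j}s$, the relation of the preceding paragraph being what converts $(I-F)\partial_z(\texttt{j}s)$ into $\texttt{j}\,d^{\mathcal{H}}s(\partial_{\bar z})$.

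I expect the main obstacle to be the bookkeeping in this final reduction. The matrix identity carries roughly twice as many terms as in Lemma \ref{lem3-01}, and the conjugate-linearity of $\texttt{j}$ (Lemma \ref{lem2-0}(iv)) forces care with complex conjugation of the inner-product coefficients and with the interchange $\partial_z\leftrightarrow\partial_{\bar z}$ dictated by Lemma \ref{lem2-0}(iii) whenever $\texttt{j}$ is moved past a derivative. The delicate point is to show that the $\texttt{j}s$-component of the endomorphism equation is obtained from its $s$-component by applying $\texttt{j}$ and invoking $\texttt{j}F=F\texttt{j}$, so that the two components are equivalent and the whole zero-curvature equation genuinely collapses to the single vector equation \eqref{lem3-03-1} — rather than \eqref{lem3-03-1} being only a necessary consequence. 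It is precisely the commutation relation $\texttt{j}F=F\texttt{j}$ that makes this collapse work.
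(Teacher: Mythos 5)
Your proposal follows the paper's own proof essentially verbatim: the paper likewise sets $\varphi=ss^*+(\texttt{j}s)(\texttt{j}s)^*$, realizes $\mathbb{H}P^n$ as a totally geodesic submanifold of $U(2n+2)$ via the Cartan embedding $\tau(\varphi)=2\varphi-I$, invokes Uhlenbeck's zero-curvature criterion \eqref{lem3-01-2}, writes $A_z$ and $A_{\bar z}$ in exactly the four-term form you describe, and reads off \eqref{lem3-03-1}. The only difference is that you make explicit the commutation $\texttt{j}F=F\texttt{j}$ and the resulting equivalence of the $s$- and $\texttt{j}s$-components, which the paper leaves implicit.
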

\begin{proof}
Let $\varphi:M\rightarrow \mathbb{H}P^n$ be an isometric immersion.
Let $s$ be a column vector valued in $\mathbb{C}^{2n+2}$ satisfying $|s|=1$,
which is a natural local lift of $\varphi$. Set $\varphi=ss^*+(\texttt{j}s)(\texttt{j}s)^*$.
We consider $\mathbb{H}P^{n}$ as a totally geodesic submanifold
of $U(2n+2)$ by Cartan imbedding $\tau(\varphi)=2\varphi-I\in U(2n+2)$ for all
$\varphi\in \mathbb{H}P^{n}$.
Then $\varphi$ is minimal
if and only if $\tau\circ\varphi$ is minimal. It follows that \eqref{lem3-01-2} holds,
here
\begin{small}
$$
A_z=-\left[d^{\mathcal{H}}s({\partial}_z)\right]s^*+s\left[d^{\mathcal{H}}s({\partial}_{\bar{z}})\right]^*
-\left[\texttt{j}d^{\mathcal{H}}s({\partial}_{\bar{z}})\right](\texttt{j}s)^*
+(\texttt{j}s)\left[\texttt{j}d^{\mathcal{H}}s({\partial}_{z})\right]^*,
$$
\end{small}
and
\begin{small}
$$
A_{\bar{z}}=-\left[d^{\mathcal{H}}s({\partial}_{\bar{z}})\right]s^*+s\left[d^{\mathcal{H}}s({\partial}_{z})\right]^*
-\left[\texttt{j}d^{\mathcal{H}}s({\partial}_{z})\right](\texttt{j}s)^*
+(\texttt{j}s)\left[\texttt{j}d^{\mathcal{H}}s({\partial}_{\bar{z}})\right]^*.
$$
\end{small}
Thus \eqref{lem3-03-1} follows from \eqref{lem3-01-2}.
\end{proof}

The complex projective space $\mathbb{C}P^{2n+1}$ is the twistor
space of $\mathbb{H}P^n$. The twistor map $t:\mathbb{C}P^{2n+1}
\rightarrow \mathbb{H}P^n$, is given by
$
t\left([z_1,z_2,\cdots,z_{2n+1},z_{2n+2}]\right)
=[z_1+z_2 \texttt{j},\cdots,z_{2n+1}+z_{2n+2}\texttt{j}].
$
Then $t$ is a Riemann submersion and the horizontal distribution
is given by
$
\sum_{i=1}^{n+1}\left(z_{2i-1}dz_{2i}-z_{2i}dz_{2i-1}\right)=0.
$

A part of the following result appeared in \cite{HeWang2005}.
For completeness we will give the whole proof.
\begin{theorem}
Let $\varphi:M\rightarrow \mathbb{H}P^n$
be a linearly full totally real isometric minimal immersion, then there exists a totally real isometric horizontal minimal lift
$[s]:M\rightarrow \mathbb{C}P^{2n+1}$.
\label{thm3-1}
\end{theorem}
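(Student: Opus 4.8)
\noindent\emph{Proof proposal.} The strategy is to correct the natural lift inside each twistor fibre so as to make it horizontal, and then to read off total reality, the induced metric and minimality from the Lemmas already proved. Let $s_0$ be the natural local lift of $\varphi$ with $|s_0|=1$, so that $t\circ[s_0]=\pi\circ s_0=\varphi$. Every lift lying in the same twistor fibre over $\varphi$ has the form $s_w=s_0+w\,\texttt{j}s_0$ for a complex function $w$, and I note first that horizontality is scale--invariant, so it suffices to work with the unnormalised $s_w$. A direct computation in the quaternion coordinates shows $\langle ds_0,\texttt{j}s_0\rangle=\sum_i\left(z_{2i-1}dz_{2i}-z_{2i}dz_{2i-1}\right)$, so that $[s_w]$ being horizontal for the twistor map $t$ means $\langle\partial_z s_w,\texttt{j}s_w\rangle=\langle\partial_{\bar z}s_w,\texttt{j}s_w\rangle=0$. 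Using the properties of $\texttt{j}$ collected in Lemma \ref{lem2-0} together with $\langle s_0,\texttt{j}s_0\rangle=0$, I would reduce these two scalar equations to the first--order Riccati system
\[
\partial_z w = 2Pw-\bar{S}w^2-Q,\qquad \partial_{\bar z}w=-2\bar{P}w-\bar{Q}w^2-S,
\]
where $P=\langle\partial_z s_0,s_0\rangle$, $Q=\langle\partial_z s_0,\texttt{j}s_0\rangle$ and $S=\langle\partial_{\bar z}s_0,\texttt{j}s_0\rangle$.

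The decisive and hardest step is the integrability of this overdetermined system, i.e. the Frobenius condition $\partial_{\bar z}\partial_z w=\partial_z\partial_{\bar z}w$. This is exactly where total reality is used: because $\varphi$ is totally real, $\cos\alpha=0$, so by \eqref{eq2-13}--\eqref{eq2-14} each of $\cos\alpha_1,\cos\alpha_2,\cos\alpha_3$ vanishes and hence the (locally defined) pulled--back forms $\varphi^*\omega_{\texttt{I}},\varphi^*\omega_{\texttt{J}},\varphi^*\omega_{\texttt{K}}$ all vanish; equivalently, Lemma \ref{lem2-1} gives $|d^{\mathcal{H}}s_0(\partial_z)|=|d^{\mathcal{H}}s_0(\partial_{\bar z})|$ and $\langle d^{\mathcal{H}}s_0(\partial_z),\texttt{j}d^{\mathcal{H}}s_0(\partial_{\bar z})\rangle=0$. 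I would substitute these identities (and, where second derivatives of $s_0$ occur, the minimality equation \eqref{lem3-03-1} of Lemma \ref{lem3-03}) into the compatibility condition and verify that the two mixed derivatives coincide. Geometrically this is the vanishing of the $Sp(1)$--part of the twistor curvature pulled back by a totally real surface. Since $M$ is simply connected, Frobenius then yields a global solution $w$; setting $s:=s_w/|s_w|$ produces a horizontal lift $[s]:M\to\mathbb{C}P^{2n+1}$ with $t\circ[s]=\varphi$.

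It remains to transfer the three properties, which I expect to be routine substitutions. Isometry and the immersion property are immediate, since $t$ is a Riemannian submersion and $[s]_*(TM)$ is horizontal, so $[s]$ induces the same metric as the isometric immersion $\varphi$. For total reality, horizontality forces $Q=S=0$, whence $d^{\mathcal{H}}s(\partial_z)=\partial_z s-\langle\partial_z s,s\rangle s=s^{\perp}(\partial_z s)$ and likewise for $\partial_{\bar z}$; the first identity of \eqref{eq+0} then reads $|s^{\perp}(\partial_z s)|=|s^{\perp}(\partial_{\bar z}s)|$, which by Lemma \ref{lem3-02} means $[s]$ is totally real in $\mathbb{C}P^{2n+1}$. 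For minimality I would feed $Q=S=0$ and $\langle d^{\mathcal{H}}s(\partial_z),\texttt{j}d^{\mathcal{H}}s(\partial_{\bar z})\rangle=0$ into the minimality equation \eqref{lem3-03-1}: the two $\texttt{j}s$--terms drop out and what remains is precisely equation \eqref{lem3-01-6}, so by Lemma \ref{lem3-01} the map $[s]$ is an isometric minimal immersion. This exhibits $[s]$ as the required totally real isometric horizontal minimal lift. Linear fullness of $\varphi$ plays no role in this construction; the only point demanding genuine care is the integrability step of the second paragraph, while the remaining verifications are substitutions into Lemmas \ref{lem3-01}--\ref{lem3-03}.
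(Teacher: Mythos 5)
Your overall strategy is the same as the paper's: start from the natural lift, gauge it within the twistor fibre so that it becomes horizontal, observe that the obstruction to doing so is an integrability condition which vanishes precisely because $\varphi$ is totally real, and then feed horizontality into Lemmas \ref{lem3-01}--\ref{lem3-03} to read off total reality, isometry and minimality of $[s]$. Your final transfer of properties is correct and in fact slightly leaner than the paper's (you only need $\langle ds,\texttt{j}s\rangle=0$, not the full normalisation $\langle ds,s\rangle=0$, to land on \eqref{lem3-01-6}). The difference in execution is that you parametrise the fibre by the affine coordinate $w$ in $s_w=s_0+w\,\texttt{j}s_0$, which turns the horizontality condition into a nonlinear Riccati system, whereas the paper works with the frame $(s,\texttt{j}s)$ and an $SU(2)$-valued gauge $T$ solving the \emph{linear} system \eqref{sec4-5}, whose zero-curvature condition is the explicitly computed matrix identity \eqref{sec4-4}.

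Two concrete gaps remain. First, the step you yourself call decisive --- that the Frobenius compatibility of your Riccati system is implied by total reality --- is asserted but not carried out, and it is exactly the content of the paper's computation of $\partial_{\bar z}D+\partial_z D^*+[D,D^*]$; note that in the linear formulation the second derivatives of $s_0$ cancel identically, so only Lemma \ref{lem2-1} (total reality) is used and the minimality equation \eqref{lem3-03-1} is \emph{not} needed for integrability, contrary to your hedge. Until this computation is done your proof is a plan, not a proof. Second, your appeal to ``Frobenius plus simple connectivity'' for a \emph{global} solution $w$ does not work in the affine chart: a Riccati solution can reach $w=\infty$ (the lift crossing $[\texttt{j}s_0]$), at which point $s_w=s_0+w\,\texttt{j}s_0$ is meaningless. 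You must either pass to the projective picture on the compact fibre $\mathbb{C}P^1$ or, as the paper does, linearise via the $SU(2)$ gauge (global solutions of a linear ODE system exist automatically, and local horizontal lifts differ by constant elements of $SU(2)$, which is how the paper patches them over simply connected $M$). Both gaps are fixable, and the cleanest fix for each is precisely to replace the Riccati coordinate by the linear $SU(2)$ system.
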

\begin{proof}
Let $\varphi:M\rightarrow \mathbb{H}P^n$
be a linearly full totally real isometric minimal immersion.
Let $[s]:U\subset M\rightarrow \mathbb{C}P^{2n+1}$ be a natural local lift of $\varphi$ satisfying $|s|=1$.
Here $s$ is a column vector valued in $\mathbb{C}^{2n+2}$.
Set
\begin{sequation}\label{sec4-2-1}
D=\left(s,\texttt{j}s\right)^*\partial_z\left(s,\texttt{j}s\right),
\end{sequation}
then $D$ is a $2\times 2$-matrix.
Then we have
\begin{sequation}
\begin{cases}
\partial_z\left(s,\texttt{j}s\right)=\left(s,\texttt{j}s\right)D+
\left(d^{\mathcal{H}}s({\partial}_z),\texttt{j}d^{\mathcal{H}}s({\partial}_{\bar{z}})\right),\\
\partial_{\bar{z}}\left(s,\texttt{j}s\right)=-\left(s,\texttt{j}s\right)D^*
+\left(d^{\mathcal{H}}s({\partial}_{\bar{z}}),\texttt{j}d^{\mathcal{H}}s({\partial}_z)\right),
\end{cases}
\label{sec4-2}
\end{sequation}
where $\left(d^{\mathcal{H}}s({\partial}_z),\texttt{j}d^{\mathcal{H}}s({\partial}_{\bar{z}})\right)^*
\left(s,\texttt{j}s\right)=\left(d^{\mathcal{H}}s({\partial}_{\bar{z}}),\texttt{j}d^{\mathcal{H}}s({\partial}_z)\right)^*
\left(s,\texttt{j}s\right)=0$.\\
From \eqref{sec4-2} and the identity $\partial_z\partial_{\bar{z}}=\partial_{\bar{z}}\partial_z$, we get
\begin{small}
$$
\partial_{\bar{z}}D+\partial_z D^*+[D,D^*]=
\begin{pmatrix}|d^{\mathcal{H}}s({\partial}_z)|^2-|d^{\mathcal{H}}s({\partial}_{\bar{z}})|^2,
\quad 2\left\langle \texttt{j}d^{\mathcal{H}}s({\partial}_{\bar{z}}), d^{\mathcal{H}}s({\partial}_z)\right\rangle\\
2\left\langle d^{\mathcal{H}}s({\partial}_z),\texttt{j}d^{\mathcal{H}}s({\partial}_{\bar{z}})\right\rangle,
\quad |d^{\mathcal{H}}s({\partial}_{\bar{z}})|^2-|d^{\mathcal{H}}s({\partial}_z)|^2\end{pmatrix}.
$$
\end{small}

Since $\varphi$ is totally real, then by Lemma \ref{lem2-1} we have
\begin{sequation}
\partial_{\bar{z}}D+\partial_z D^*+[D,D^*]=0.
\label{sec4-4}
\end{sequation}
Let $\tilde{s}$ be another local lift of $\varphi$ satisfying $|\tilde{s}|=1$,
then
\begin{sequation}\label{sec4-4+1}
\left(\tilde{s},\texttt{j}\tilde{s}\right)=\left(s,\texttt{j}{s}\right)T,
\end{sequation}
where $T:U\rightarrow SU(2)$ is to be determined such that $\tilde{s}$ is horizontal, i.e. $\left(\tilde{s},\texttt{j}\tilde{s}\right)^*d\left(\tilde{s},\texttt{j}\tilde{s}\right)=0$.
Such $T$ is a solution of the linear PDE
\begin{sequation}
dT+(Ddz-D^*d\overline{z})T=0.
\label{sec4-5}
\end{sequation}
The integrable condition of \eqref{sec4-5} is just \eqref{sec4-4},
so it has a unique solution locally on $M$ for any given initial value.
Let $T$ be a solution of \eqref{sec4-5} with the initial value $T(0)\in SU(2)$.
From \eqref{sec4-5} we have $d(T^*T)=0$ and $d|T|=0$, so $T\in SU(2)$.
Hence there exists a local horizontal lift of $\varphi$, also denoted by $s$.

Let $s$ and $\tilde{s}$ be two horizontal lifts of $\varphi$ on $U$, then
by \eqref{sec4-2-1} we have $D=0$. We define $T$ by \eqref{sec4-4+1}.
It follows from \eqref{sec4-5} that $T:U\rightarrow SU(2)$ is a constant map.
Since $M$ is simply connected, we can extend $s$ to a global horizontal lift
of $\varphi$ on $M$.

Since $\varphi$ is minimal ,
then from Lemma \ref{lem3-03} we know \eqref{lem3-03-1} holds.
Since $s$ is horizontal, then we have
\begin{sequation}\label{eq+1+1}
d^{\mathcal{H}}s=ds,
\left\langle\partial_{z}s,s\right\rangle
=\left\langle\partial_{z}s,\texttt{j}s\right\rangle
=\left\langle\partial_{\bar{z}}s,s\right\rangle
=\left\langle\partial_{\bar{z}}s,\texttt{j}s\right\rangle=0.
\end{sequation}
Substituting \eqref{eq+1+1} into \eqref{lem3-03-1} and using \eqref{eq+0}, we have
\begin{sequation}\label{eq+2}
\partial_{\bar{z}}{\partial}_z s
+|{\partial}_zs|^2s
=0,
|{\partial}_z s|^2
=|\partial_{\bar{z}}s|^2.
\end{sequation}
From \eqref{eq+2} we find \eqref{lem3-02-1} holds, then $[s]$ is totally real by Lemma \ref{lem3-02}.
From \eqref{eq+1+1} and \eqref{eq+2} we find \eqref{lem3-01-1} holds, then $s$ is minimal by Lemma \ref{lem3-01}.
Since $\varphi$ is isometric,
then
\begin{sequation}\label{eq+4}
ds_M^2=\left(|d^{\mathcal{H}}s({\partial}_z)|^2
+|d^{\mathcal{H}}s({\partial}_{\bar{z}})|^2\right)dzd\bar{z}
=\left(|{\partial}_z s|^2
+|\partial_{\bar{z}}s|^2\right)dzd\bar{z},
\end{sequation}
where in the last equation we use the fact that $s$ is horizontal.
It verifies that $s$ is isometric.
So we get our conclusions.
\end{proof}

Let $\varphi:M\rightarrow \mathbb{H}P^n$
be a linearly full totally real isometric harmonic map.
By \cite {A. Bahy-El-Dien 1991} and \cite{Burstall 1986}, we know that $\varphi$ belongs to the following harmonic sequence in $G(2,2n+2)$,
\begin{sequation}
\cdots
\stackrel{A''_{\varphi_1}}{\longleftarrow} \underline{\varphi}_{-1}
\stackrel{A''_{\varphi_0}}{\longleftarrow} \underline{\varphi}_0=\underline{\varphi}
\stackrel{A'_{\varphi_0}}{\longrightarrow} \underline{\varphi}_{1}
\stackrel{A'_{\varphi_1}}{\longrightarrow} \underline{\varphi}_{2}
\stackrel{A'_{\varphi_2}}{\longrightarrow} \cdots
\stackrel{A'_{\varphi_{m-1}}}{\longrightarrow} \underline{\varphi}_{m}
\stackrel{A'_{\varphi_m}}{\longrightarrow} \underline{\varphi}_{m+1}
\stackrel{A'_{\varphi_{m+1}}}{\longrightarrow} \cdots,
\label{eq:34}
\end{sequation}
where $\underline{\varphi}_{-k}={\texttt{j}}\underline{\varphi}_{k}$
are $2$-dimensional harmonic subbundle of the trivial bundle
$M\times \mathbb{C}^{2n+2}$.
Here $\mathbb{H}P^n$ is seen as the totally geodesic submanifold of
$G(2,2n+2)$.

\begin{prop}\label{prop3-1}
Let $\varphi:M\rightarrow \mathbb{H}P^n$
be a linearly full totally real isometric harmonic map
generating the harmonic sequence \eqref{eq:34}.
If its totally real horizontal minimal lift $[s]$ generates
the following harmonic sequence in $\mathbb{C}P^{2n+1}$,
\begin{sequation}\label{prop3-1-eq1}
\cdots
\stackrel{A''_{f_{-m}}}{\longleftarrow} \underline{f}_{-m}
\stackrel{A''_{f_{-m+1}}}{\longleftarrow} \cdots
\stackrel{A''_{f_{-1}}}{\longleftarrow} \underline{f}_{-1}
\stackrel{A''_{f_0}}{\longleftarrow} \underline{f}_0=[s]
\stackrel{A'_{f_0}}{\longrightarrow} \underline{f}_{1}
\stackrel{A'_{f_1}}{\longrightarrow} \cdots
\stackrel{A'_{f_{m-1}}}{\longrightarrow} \underline{f}_{m}
\stackrel{A'_{f_{m}}}{\longrightarrow} \cdots,
\end{sequation}
where $\underline{f}_{-k},\underline{f}_{k}$
are line bundles,
then for $k=0,1,\cdots,m,\cdots$,
\begin{seqnarray}\label{prop3-1-eq5}
\left\langle f_k,~\texttt{j}f_{-q}\right\rangle
=\left\langle \texttt{j}f_{-k},~f_{q}\right\rangle
=0~(q=k-2,k-1,k),
\end{seqnarray}
and
\begin{sequation}\label{prop3-1-eq6}
\underline{\varphi}_k=\underline{f}_k\oplus \texttt{j}\underline{f}_{-k}.
\end{sequation}
\end{prop}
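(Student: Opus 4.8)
The plan is to prove both assertions by induction on $k$, treating the orthogonality relations \eqref{prop3-1-eq5} as the engine and deducing the splitting \eqref{prop3-1-eq6} from them. First I would record the basic data of the lift $[s]$ supplied by Theorem \ref{thm3-1}: since $s=f_0$ is horizontal, \eqref{eq+1+1} forces the projection terms to drop out, so that $f_1=A'_{f_0}f_0=\partial_z s$ and $f_{-1}=A''_{f_0}f_0=\partial_{\bar z}s$; and the $\mathbb{C}P^{2n+1}$-harmonic sequence obeys the usual band structure $\partial_z f_i=f_{i+1}+a_if_i$ and $\partial_{\bar z}f_i=c_if_{i-1}+b_if_i$ with the $f_i$ mutually orthogonal. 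Writing $g_{p,r}:=\langle f_p,\texttt{j}f_r\rangle$ and using Lemma \ref{lem2-0}(i),(iv) to pass between $\langle f_k,\texttt{j}f_{-q}\rangle$ and $\overline{\langle f_q,\texttt{j}f_{-k}\rangle}$, the whole of \eqref{prop3-1-eq5} becomes the single symmetric statement that $g_{p,r}=0$ whenever $|p+r|\le2$.

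The key computational step is to differentiate $g_{p,r}$. Applying the Leibniz rule for $\langle\,,\rangle$, the commutation relations $\partial_{\bar z}\circ\texttt{j}=\texttt{j}\circ\partial_z$ and $\partial_z\circ\texttt{j}=\texttt{j}\circ\partial_{\bar z}$ of Lemma \ref{lem2-0}(iii), and the band structure, one obtains transport identities of the schematic form
$$
g_{p+1,r}+g_{p,r+1}=\partial_z g_{p,r}-(a_p+a_r)g_{p,r},\qquad c_p\,g_{p-1,r}+c_r\,g_{p,r-1}=\partial_{\bar z}g_{p,r}-(b_p+b_r)g_{p,r}.
$$
These raise or lower one index at a time while changing the level $p+r$ by exactly $\pm1$. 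For the base, the diagonal relations $g_{p,p}=\langle f_p,\texttt{j}f_p\rangle=0$ are immediate from Lemma \ref{lem2-0}(ii), which already disposes of the entries with $p=r\in\{-1,0,1\}$ (levels $-2,0,2$); the remaining low-index entries of level $0,\pm1$, namely $g_{0,0},g_{1,0},g_{0,1},g_{-1,0},g_{0,-1},g_{1,-1}$, follow from horizontality \eqref{eq+1+1}, from \eqref{eq+0} of Lemma \ref{lem2-1}, and from differentiating $\langle s,\texttt{j}s\rangle=0$; and the level $\pm2$ base entries $g_{0,\pm2},g_{\pm2,0}$ then come from a single application of the transport identities seeded by these.

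For the inductive step I would assume \eqref{prop3-1-eq5} at stage $k$, i.e. $g_{p,r}=0$ for $|p+r|\le2$ with $\max(|p|,|r|)\le k$, and produce the five new stage-$(k+1)$ relations in a definite order: $g_{k+1,-k}$ (level $1$) and $g_{k+1,-k+1}$ (level $2$) from the first identity applied to $g_{k,-k},g_{k,-k+1},g_{k,-k+2}$; then $g_{k-1,-k-1}$ (level $-2$) and $g_{k,-k-1}$ (level $-1$) from the second identity applied to $g_{k-1,-k},g_{k-2,-k},g_{k,-k}$; and finally $g_{k+1,-k-1}$ (level $0$) from the first identity once $g_{k,-k-1}$ is in hand. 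With \eqref{prop3-1-eq5} established, \eqref{prop3-1-eq6} follows by a parallel induction: applying $A'_{\varphi_k}=\partial_z\bmod\underline{\varphi}_k$ to the frame $\{f_k,\texttt{j}f_{-k}\}$ of $\underline{\varphi}_k=\underline{f}_k\oplus\texttt{j}\underline{f}_{-k}$ and using $\partial_z(\texttt{j}f_{-k})=\texttt{j}\partial_{\bar z}f_{-k}$, the two images are $f_{k+1}$ and $\texttt{j}f_{-k-1}$ modulo $\underline{\varphi}_k$, and the relations $g_{k+1,-k}=g_{k,-k-1}=g_{k+1,-k-1}=0$ guarantee that both survive the projection and are orthogonal, giving $\underline{\varphi}_{k+1}=\underline{f}_{k+1}\oplus\texttt{j}\underline{f}_{-k-1}$.

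The main obstacle, and the reason the band is cut off precisely at $|p+r|\le2$, is that each differentiation shifts the level $p+r$ by one, so vanishing on a single level does not by itself force vanishing on the next; what makes the induction close is that the two level-$2$ relations at stage $k$ are exactly the data needed to regenerate the level-$2$ relations at stage $k+1$ (and symmetrically for level $-2$), so the width-five band $|p+r|\le2$ is self-sustaining while no narrower band is. A secondary point to address is degeneration at the ends of the finite $\mathbb{C}P^{2n+1}$-sequence, where some $f_p$ or some coefficient $c_p$ vanishes; there the relevant $g_{p,r}$ are trivially zero, so the boundary cases cause no difficulty but must be noted.
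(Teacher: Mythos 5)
Your proposal is correct and follows essentially the same route as the paper's proof: an induction that propagates the orthogonality relations by differentiating them in $z$ and $\bar z$, using the commutation rules of Lemma \ref{lem2-0}, the band structure of the two harmonic sequences, and horizontality/total reality for the base case, with the splitting \eqref{prop3-1-eq6} then read off from the vanishing of the projection terms. Your $g_{p,r}$ bookkeeping and the band $|p+r|\le 2$ is just a systematized repackaging of the paper's step-by-step differentiation, not a different argument.
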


\begin{proof}
It follows that
$$
\underline{\varphi}_0=\underline{f}_0\oplus \texttt{j}\underline{f}_{0}.
$$
Since
$
\left\langle \partial_z s,~\texttt{j}s\right\rangle
=\left\langle \partial_{\bar{z}} s,~\texttt{j}s\right\rangle
=\left\langle \partial_z s,~\texttt{j}\partial_{\bar{z}}s\right\rangle
=0
$
and
$f_1=\partial_z{s},~f_{-1}=\partial_{\bar{z}}{s},$
then we have
\begin{sequation}\label{prop3-1-eq3}
\left\langle f_1,~\texttt{j}f_0\right\rangle
=\left\langle f_1,~\texttt{j}f_{-1}\right\rangle
=\left\langle \texttt{j}f_{-1},~f_0\right\rangle
=0,
\end{sequation}
which implies
\begin{sequation}\label{prop3-1-eq4}
\underline{\varphi}_1=\underline{f}_1\oplus \texttt{j}\underline{f}_{-1}.
\end{sequation}

By differentiating with respect to $z$ in
$\left\langle f_1,~\texttt{j}f_0\right\rangle=0$,
$\left\langle f_1,~\texttt{j}f_{-1}\right\rangle=0$,
$\left\langle \texttt{j}f_{-1},~f_0\right\rangle=0$
and $\left\langle \texttt{j}f_{-1},~f_1\right\rangle=0$ respectively,
using \eqref{prop3-1-eq3} we obtain
\begin{sequation}\label{prop3-eq+1}
\left\langle f_2,~\texttt{j}f_0\right\rangle
=\left\langle f_2,~\texttt{j}f_{-1}\right\rangle
=\left\langle \texttt{j}f_{-2},~f_0\right\rangle
=\left\langle \texttt{j}f_{-2},~f_1\right\rangle
=0.
\end{sequation}
Then differentiating with respect to $\bar{z}$ in
$\left\langle f_2,~\texttt{j}f_{-1}\right\rangle=0$
we have
\begin{sequation}\label{prop3-eq+2}
\left\langle f_2,~\texttt{j}f_{-2}\right\rangle
=0
\end{sequation}
by \eqref{prop3-1-eq3} and \eqref{prop3-eq+1}.
It follows from \eqref{prop3-eq+1} and \eqref{prop3-eq+2}
that
\begin{sequation}\label{prop3-eq+3}
\underline{\varphi}_2=\underline{f}_2\oplus \texttt{j}\underline{f}_{-2}.
\end{sequation}

In the following, we prove \eqref{prop3-1-eq5} and \eqref{prop3-1-eq6} by induction on $k$.
When $k=1$ and $k=2$ the conclusions hold by \eqref{prop3-1-eq3} -\eqref{prop3-eq+3}.
Suppose the conclusion is true for $k-1$. Consider the case of $k$.
By induction hypotheses we have
\begin{seqnarray}\label{prop3-eq+4}
&\left\langle f_{k-1},~\texttt{j}f_{-(k-3)}\right\rangle
=\left\langle f_{k-1},~\texttt{j}f_{-(k-2)}\right\rangle
=\left\langle f_{k-1},~\texttt{j}f_{-(k-1)}\right\rangle
\nonumber\\
&=\left\langle \texttt{j}f_{-(k-1)},~f_{k-3}\right\rangle
=\left\langle \texttt{j}f_{-(k-1)},~f_{k-2}\right\rangle
=0,
\end{seqnarray}
and
\begin{sequation}\label{prop3-eq+5}
\underline{\varphi}_{k-1}=\underline{f}_{k-1}\oplus \texttt{j}\underline{f}_{-(k-1)}.
\end{sequation}
By differentiating with respect to $z$ in
$\left\langle f_{k-1},~\texttt{j}f_{-(k-2)}\right\rangle=0$,
$\left\langle f_{k-1},~\texttt{j}f_{-(k-1)}\right\rangle=0$,
$\left\langle \texttt{j}f_{-(k-1)},~f_{k-2}\right\rangle=0$
and $\left\langle\texttt{j}f_{-(k-1)},~f_{k-1}\right\rangle=0$
respectively,
using \eqref{prop3-eq+4} we obtain
\begin{sequation}\label{prop3-eq+6}
\left\langle f_k,~\texttt{j}f_{-(k-2)}\right\rangle
=\left\langle f_k,~\texttt{j}f_{-(k-1)}\right\rangle
=\left\langle \texttt{j}f_{-k},~f_{k-2}\right\rangle
=\left\langle \texttt{j}f_{-k},~f_{k-1}\right\rangle
=0.
\end{sequation}
Then differentiating with respect to $\bar{z}$ in
$\left\langle f_k,~\texttt{j}f_{-(k-1)}\right\rangle=0$
we have
\begin{sequation}\label{prop3-eq+7}
\left\langle f_k,~\texttt{j}f_{-k}\right\rangle
=0
\end{sequation}
by \eqref{prop3-eq+4} and \eqref{prop3-eq+6}.
So the case of $k$ holds, which implies \eqref{prop3-1-eq5} holds.

It follows from the properties of harmonic sequence \eqref{eq:34} that
\begin{small}
$$
\underline{\varphi}_k=\text{span}\left\{f_{k}-
\frac{\left\langle f_{k},~\texttt{j}f_{-(k-1)}\right\rangle}{|f_{-(k-1)}|^2}\texttt{j}f_{-(k-1)},
~\texttt{j}f_{-k}-
\frac{\left\langle \texttt{j}f_{-k},~f_{k-1}\right\rangle}{|f_{k-1}|^2} f_{k-1}\right\}.
$$
\end{small}
Then \eqref{prop3-1-eq5} verifies \eqref{prop3-1-eq6}.
So we finish our proof.
\end{proof}

\begin{prop}\label{prop3-2}
Let $\varphi:M\rightarrow \mathbb{H}P^n$
be a linearly full totally real isometric harmonic map
of isotropy order $n$.
Then its totally real horizontal minimal lift
$[s]:M\rightarrow \mathbb{C}P^{2n+1}$ has isotropy order $n$ or $2n+1$.
\end{prop}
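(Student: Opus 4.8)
The plan is to transfer the isotropy hypothesis on $\varphi$ across the splitting $\underline{\varphi}_k=\underline{f}_k\oplus\texttt{j}\underline{f}_{-k}$ of \eqref{prop3-1-eq6}. First I would record what the hypothesis buys us. Since $\varphi$ is linearly full and viewed inside $G(2,2n+2)$, each $\underline{\varphi}_k$ has rank two, so having isotropy order $n$ forces $\underline{\varphi}_0,\ldots,\underline{\varphi}_n$ to be mutually orthogonal; by a rank count these $n+1$ rank-two terms exhaust the fibre and give an orthogonal direct sum decomposition of $M\times\mathbb{C}^{2n+2}$. Intersecting the orthogonality $\underline{\varphi}_0\perp\underline{\varphi}_k$ $(1\le k\le n)$ with \eqref{prop3-1-eq6} yields in particular $\langle f_0,f_k\rangle=0$ for $1\le k\le n$; since consecutive terms of a harmonic sequence are automatically orthogonal, this already shows that the isotropy order $r$ of $[s]$ satisfies $r\ge n$. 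On the other hand, because the lines $\underline{f}_k$ and $\texttt{j}\underline{f}_{-k}$ $(0\le k\le n)$ are exactly the summands of the decomposition above, they span $\mathbb{C}^{2n+2}$, so $[s]$ is linearly full in $\mathbb{C}P^{2n+1}$ and hence $r\le 2n+1$. It therefore remains to exclude every intermediate value $n<r<2n+1$.

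Second, I would split according to the single scalar $\langle f_0,f_{n+1}\rangle$. If $\langle f_0,f_{n+1}\rangle\neq 0$, then $r=n$ exactly and we land in the first alternative. If $\langle f_0,f_{n+1}\rangle=0$, then $r>n$, and the claim is that $r=2n+1$. The leverage here is that $\varphi$ has isotropy order \emph{exactly} $n$: the bundle $\underline{\varphi}_{n+1}=\underline{f}_{n+1}\oplus\texttt{j}\underline{f}_{-(n+1)}$ must fail to be orthogonal to $\underline{\varphi}_0=\underline{f}_0\oplus\texttt{j}\underline{f}_0$, and with $\langle f_0,f_{n+1}\rangle=0$ this failure is pushed into one of the ``quaternionic'' pairings $\langle f_0,\texttt{j}f_{-(n+1)}\rangle$, $\langle \texttt{j}f_0,f_{n+1}\rangle$, or $\langle f_0,f_{-(n+1)}\rangle$. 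Thus in this case the obstruction to raising the order is conjugate-linear in nature rather than complex-linear.

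Third, to turn this into $r=2n+1$ I would feed the orthogonality relations $\langle f_k,\texttt{j}f_{-q}\rangle=\langle \texttt{j}f_{-k},f_q\rangle=0$ $(q=k-2,k-1,k)$ of \eqref{prop3-1-eq5} back into the recursion defining the forward sequence $A'_{f_k}$. The idea is that, relative to the fixed unitary frame $\{\underline{f}_k,\texttt{j}\underline{f}_{-k}\}_{0\le k\le n}$ provided by the full isotropy of $\varphi$, each forward term $\underline{f}_{n+1},\underline{f}_{n+2},\ldots$ must be expressed through the $\texttt{j}\underline{f}_{-k}$'s, with $\texttt{j}$ interchanging the forward part $\{f_k\}_{k>0}$ and the $\texttt{j}$-image of the backward part $\{f_{-k}\}_{k>0}$. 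Tracking this, together with the forward--backward symmetry of the isotropy order that total reality of $[s]$ (Lemma \ref{lem3-02}) supplies, I expect to conclude that $\langle f_0,f_k\rangle=0$ propagates for all $1\le k\le 2n+1$ while $\langle f_0,f_{2n+2}\rangle\neq 0$, i.e. $r=2n+1$ exactly.

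The main obstacle is precisely this last propagation step: ruling out the intermediate orders $n<r<2n+1$. The difficulty is that once the non-orthogonality at level $n+1$ is quaternionic, one must show it cannot stall at some intermediate index but instead forces the forward complex sequence to sweep through the \emph{entire} $(2n+2)$-dimensional frame. Making this rigorous requires careful bookkeeping of how $\texttt{j}$ permutes the frame lines and how the relations \eqref{prop3-1-eq5} couple the indices $k,k-1,k-2$, and it is here that the exactness of the ``order $n$'' hypothesis, rather than merely the lower bound $r\ge n$, must be used in an essential way.
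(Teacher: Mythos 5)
There is a genuine gap: your argument never actually establishes the second alternative. You handle the case $\langle f_{n+1},f_0\rangle\neq 0$ correctly (isotropy order $n$), but in the case $\langle f_{n+1},f_0\rangle=0$ you only note that the non-orthogonality of $\underline{\varphi}_{n+1}$ and $\underline{\varphi}_0$ must sit in a conjugate-linear pairing, and then defer the conclusion $r=2n+1$ to a ``propagation'' through the relations \eqref{prop3-1-eq5} which you yourself flag as the main obstacle and do not carry out. The ingredient you are missing is that isotropy order $n$ together with linear fullness forces the quaternionic harmonic sequence \eqref{eq:34} to be \emph{cyclic}: the $n+1$ mutually orthogonal rank-two bundles $\underline{\varphi}_0,\dots,\underline{\varphi}_n$ exhaust $\mathbb{C}^{2n+2}$, and shifting the isotropy relations along the sequence makes $\underline{\varphi}_{n+1}$ orthogonal to $\underline{\varphi}_1,\dots,\underline{\varphi}_n$, so $\underline{\varphi}_{n+1}=\underline{\varphi}_0$ exactly --- not merely ``fails to be orthogonal to $\underline{\varphi}_0$''. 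With this equality the hard case collapses: if $\langle f_{n+1},f_0\rangle=0$, then $f_{n+1}$ lies in $\underline{\varphi}_0=\underline{f}_0\oplus\texttt{j}\underline{f}_0$ and is orthogonal to $f_0$, hence $\underline{f}_{n+1}=\texttt{j}\underline{f}_0$ and, by the orthogonal splitting \eqref{prop3-1-eq6} at level $n+1$, also $\texttt{j}\underline{f}_{-(n+1)}=\underline{f}_0$. Since $\texttt{j}$ intertwines $\partial_z$ and $\partial_{\bar z}$ (Lemma \ref{lem2-0}(iii)), the forward sequence of $[s]$ beyond index $n$ is then the $\texttt{j}$-image of its backward sequence, $\underline{f}_{n+1+k}=\texttt{j}\underline{f}_{-k}$; orthogonality of $\underline{f}_0$ to $\underline{f}_1,\dots,\underline{f}_{2n+1}$ follows from $\underline{\varphi}_0\perp\underline{\varphi}_k$ together with Lemma \ref{lem2-0}(ii), and $\underline{f}_{2n+2}=\texttt{j}\underline{f}_{-(n+1)}=\underline{f}_0$ closes the sequence at order exactly $2n+1$. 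No intermediate value $n<r<2n+1$ ever needs to be ``excluded'' by index bookkeeping.

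A secondary error: your claim that $[s]$ is linearly full in $\mathbb{C}P^{2n+1}$ because the lines $\underline{f}_k$ and $\texttt{j}\underline{f}_{-k}$ span $\mathbb{C}^{2n+2}$ is false. Linear fullness of $[s]$ concerns the span of its own harmonic sequence $\{\underline{f}_k\}$, and the vectors $\texttt{j}f_{-k}$ need not lie in that span; the remark following Theorem \ref{thm3} exhibits precisely a lift of isotropy order $n$ that is not linearly full in $\mathbb{C}P^{2n+1}$. This does not damage your (unneeded) upper bound $r\le 2n+1$, but the assertion should be removed.
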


\begin{proof}
Since the isotropy order of $\varphi$ is $n$, then the harmonic sequence
\eqref{eq:34} is cyclic, i.e.,
$
\underline{\varphi}_{p}\perp \underline{\varphi}_{q}~(0\leq p<q\leq n),
~\underline{\varphi}_{n+1}=\underline{\varphi}_0.
$
From Proposition \ref{prop3-1},
$
\underline{\varphi}_{k}=\underline{f}_{k}\oplus \texttt{j}\underline{f}_{-k}~(0\leq k\leq n+1).
$
If $\left\langle f_{n+1},~f_{0}\right\rangle\neq0$,
then the isotropy order of $\underline{f}_0=[s]$ is $n$.
If $\left\langle f_{n+1},~f_{0}\right\rangle=0$,
then $\underline{f}_{n+1}=\texttt{j}\underline{f}_{0}$
and $\texttt{j}\underline{f}_{-(n+1)}=\underline{f}_{0}$
by $\underline{\varphi}_{n+1}=\underline{\varphi}_0$.
It follows that the isotropy order of $\underline{f}_0=[s]$ is $2n+1$.
\end{proof}

\section{Totally real flat minimal surfaces}

The following result appears in \cite{Jensen-Liao 1995}, see also
\cite{Kenmotsu 1985} and \cite{Bolton-Woodward 1992}.
\begin{theorem}
If $f:\mathbb{C}\rightarrow \mathbb{C}P^n$ is a linearly full totally real harmonic map with induced
metric $f^*ds^2=2dzd\overline{z}$. Then up to a unitary equivalence,
$f=\left[V_0^{(n)}\right]$, where
\begin{sequation}
V_0^{(n)}(z)=\begin{pmatrix}
e^{a_0z-\overline{a}_0\overline{z}}\xi^0,
&e^{a_1z-\overline{a}_1\overline{z}}\xi^1,
&\cdots,
&e^{a_nz-\overline{a}_n\overline{z}}\xi^n
\end{pmatrix}^T,
\label{eq3-13}
\end{sequation}
and
$a_k=e^{\texttt{i}\theta_k},~\xi^k=\sqrt{r_k}~(r_k>0)$
for $k=0,1,\cdots,n$, satisfying
$0=\theta_0<\theta_1<\cdots <\theta_n<2\pi,
~r_0+r_1+\cdots+r_n=1$.
\label{thm3-2}
\end{theorem}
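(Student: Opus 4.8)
The plan is to pick a unit lift of $f$ adapted to the Lagrangian condition, collapse the three hypotheses into a single constant–coefficient equation, and then solve that equation spectrally.

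\emph{Reduction.} Since $f$ is totally real its K\"ahler angle is $\tfrac{\pi}{2}$, so $f^{*}\omega_{FS}=0$; as $f^{*}\omega_{FS}$ is the curvature of the connection induced on the pulled–back Hopf bundle $f^{*}(S^{2n+1}\to\mathbb{C}P^{n})$, this connection is flat, and because $\mathbb{C}$ is simply connected $f$ admits a \emph{global} unit lift $s$ with $\langle\partial_z s,s\rangle=\langle\partial_{\bar z}s,s\rangle=0$. For such a horizontal lift the $\mathbb{C}P^{n}$ analogue of Lemma~\ref{lem3-02} gives $|\partial_z s|^2=|\partial_{\bar z}s|^2$, while $f^{*}ds^{2}=2\,dz\,d\bar z$ gives $|\partial_z s|^2+|\partial_{\bar z}s|^2=2$; hence $|\partial_z s|^2=|\partial_{\bar z}s|^2=1$. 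Every term of the harmonic equation of Lemma~\ref{lem3-01} carrying a factor $\langle\partial_z s,s\rangle$ or $\langle\partial_{\bar z}s,s\rangle$ now drops out, and since $\langle\partial_{\bar z}\partial_z s,s\rangle=\partial_{\bar z}\langle\partial_z s,s\rangle-|\partial_z s|^2=-1$, the equation collapses to $\partial_z\partial_{\bar z}s=-s$ with $|s|\equiv1$.

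\emph{Spectral solution.} Put $\sigma_k=\partial_z^{k}s$, so that $\partial_{\bar z}\sigma_k=-\sigma_{k-1}$; thus the $\mathbb{C}$–span $\mathcal{W}=\operatorname{span}\{\sigma_k\}$ is invariant under both $\partial_z$ and $\partial_{\bar z}$. Once $\dim_{\mathbb{C}}\mathcal{W}<\infty$ is known (see below), $\partial_z,\partial_{\bar z}$ are commuting endomorphisms of $\mathcal{W}$ with $\partial_z\partial_{\bar z}=-\operatorname{id}$, so $\partial_z$ is invertible and $\partial_{\bar z}=-\partial_z^{-1}$. Decomposing $s$ into generalized $\partial_z$–eigenvectors, an eigenvalue $\lambda$ contributes $p(z,\bar z)e^{\lambda z-\lambda^{-1}\bar z}$ with $p$ polynomial; boundedness of $s$ forces $p$ constant and $|\lambda|=1$, because $|e^{\lambda z-\lambda^{-1}\bar z}|=e^{\operatorname{Re}((\lambda-\overline{\lambda}^{-1})z)}$ is bounded on $\mathbb{C}$ exactly when $\lambda=\overline{\lambda}^{-1}$. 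Hence $s=\sum_k c_k\,e^{a_k z-\bar a_k\bar z}$ is a \emph{finite} sum with distinct $a_k=e^{\texttt{i}\theta_k}$ and $c_k\in\mathbb{C}^{n+1}\setminus\{0\}$.

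\emph{Diagonalization.} The characters $z\mapsto e^{\delta z-\bar\delta\bar z}$ are linearly independent for distinct $\delta$, so matching them in $|s|^2\equiv1$ and in $|\partial_z s|^2\equiv1$ yields, for each value $\delta=a_k-a_l\neq0$, the two relations $\sum\langle c_k,c_l\rangle=0$ and $\sum a_k\bar a_l\langle c_k,c_l\rangle=0$ taken over the pairs with $a_k-a_l=\delta$. Two unit–modulus numbers with prescribed difference $\delta$ form the intersection of two unit circles, hence each such pair–set has at most two members; the two relations, whose coefficients $a_k\bar a_l=1+\delta\bar a_l$ are then distinct, force $\langle c_k,c_l\rangle=0$ for all $k\neq l$, while the $\chi_0$–mode gives $\sum_k|c_k|^2=1$. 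Linear fullness makes $\{c_k\}_{k=0}^{n}$ an orthogonal basis of $\mathbb{C}^{n+1}$, and the unitary map sending $c_k/|c_k|$ to the $k$–th standard vector carries $s$ to $V_0^{(n)}$ with $\xi^{k}=\sqrt{r_k}$, $r_k=|c_k|^2$. Finally $\sum_k r_k=1$; horizontality reads $\langle\partial_z s,s\rangle=\sum_k a_k r_k=0$ (the minimality balancing condition); and a rotation $z\mapsto e^{\texttt{i}\psi}z$ (which preserves $2\,dz\,d\bar z$) followed by relabeling normalizes the phases to $0=\theta_0<\theta_1<\cdots<\theta_n<2\pi$.

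\emph{Main obstacle.} The crux is the finiteness $\dim_{\mathbb{C}}\mathcal{W}<\infty$, equivalently that the coefficients in a minimal relation $\sigma_m=\sum_{k<m}\phi_k\sigma_k$ are constant. Differentiating this relation and substituting $\partial_{\bar z}\sigma_k=-\sigma_{k-1}$ (with the companion relation for $\partial_z$) produces a first–order system for the $\phi_k$ that one must show implies $\partial_z\phi_k=\partial_{\bar z}\phi_k=0$; this is precisely where flatness, i.e.\ the constancy of the conformal factor in $2\,dz\,d\bar z$, is essential, the general harmonic case yielding instead a nontrivial Toda-type system. Equivalently, one must exclude a genuinely continuous Herglotz superposition of unit–frequency plane waves, the constant–norm constraint being what enforces atomicity. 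Once this is settled, the eigen-decomposition and the orthogonality argument above are routine.
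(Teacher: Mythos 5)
The paper offers no proof of Theorem \ref{thm3-2}: it is quoted from \cite{Jensen-Liao 1995} (see also \cite{Kenmotsu 1985}, \cite{Bolton-Woodward 1992}), so there is no internal argument to measure yours against; it must stand on its own. Your reduction is sound: total reality gives $f^{*}\omega_{FS}=0$, hence a global horizontal unit lift on the simply connected domain, and Lemmas \ref{lem3-01} and \ref{lem3-02} together with the metric normalization do collapse everything to $\partial_z\partial_{\bar z}s=-s$ with $|s|=|\partial_z s|=|\partial_{\bar z}s|=1$. Your diagonalization step is also a clean way to finish \emph{once} one knows $s=\sum_k c_k e^{a_kz-\bar a_k\bar z}$ is a finite sum with $|a_k|=1$: the observation that at most two pairs of unit-modulus numbers share a given difference $\delta$, so that the two moment relations from $|s|^2\equiv 1$ and $|\partial_z s|^2\equiv 1$ form an invertible $2\times 2$ system forcing $\langle c_k,c_l\rangle=0$, is correct (and you even recover the balancing condition $\sum_k r_k a_k=0$ from horizontality, which the paper's statement omits).

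The problem is that the finiteness you defer is not a technical loose end but the entire content of the theorem, and your proposal does not prove it. The spectral argument presupposes $\dim_{\mathbb C}\mathcal W<\infty$ (note also that $\mathcal W$ must include the $\partial_{\bar z}^k s$ as well, since $\partial_{\bar z}\sigma_0$ is not $-\sigma_{-1}$), and the boundedness argument you invoke is applied to generalized eigencomponents of $s$, which are linear combinations of higher derivatives $\partial_z^k s$ and are not known to be bounded a priori. Without finiteness one cannot exclude a continuous superposition of unit-frequency exponentials, as you yourself acknowledge under ``Main obstacle.'' Closing this gap is where essentially all the work of \cite{Kenmotsu 1985} and \cite{Jensen-Liao 1995} lies: one analyzes the harmonic sequence (equivalently the higher-order osculating data) of $f$ and shows that flatness of the induced metric forces the resulting Toda-type system to be constant, whence $f$ is an orbit of a one-parameter group of isometries and the exponential form follows. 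As written, your proposal is a correct outline of the known strategy with its decisive step left open, so it is not an acceptable proof.
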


In the following we will study linearly full totally real flat minimal surfaces in $\mathbb{H}P^n$.
Let $\varphi:\mathbb{C}\rightarrow \mathbb{H}P^n$ be a linearly full totally real flat minimal surface,
from Theorem \ref{thm3-1}, we know there exists a totally real flat minimal surface $[s]:\mathbb{C}\rightarrow\mathbb{C}P^{2n+1}$ as
the horizontal lift of $\varphi$.
Here $[s]$ may not be linearly full in $\mathbb{C}P^{2n+1}$, then it lies in $\mathbb{C}P^{m}\subset\mathbb{C}P^{2n+1}$ for
$n\leq m\leq 2n+1$, where $m\geq n$ by $\varphi$ being linearly full in $\mathbb{H}P^n$. Using Theorem \ref{thm3-2}, we know $s$ is given by \eqref{eq3-13}, up to $U(2n+2)$. But the isometry group of $\mathbb{H}P^n$ is $Sp(n+1)=\left\{U\in U(2n+2),~UJ_{n+1}U^{T}=J_{n+1}\right\}$, a subgroup of $U(2n+2)$, then in order to give the explicit expression
of $\varphi$, we need to characterize $s$, up to $Sp(n+1)$.

\begin{theorem}
Let $\varphi:\mathbb{C}\rightarrow \mathbb{H}P^n$ be a linearly full totally real flat minimal surface
of isotropy order $n$
with the induced metric $\varphi^*ds^2=2dzd\overline{z}$,
then up to a symplectic isometry of $\mathbb{H}P^n$,
$\varphi$ lies in $\mathbb{C}P^n~(\subset \mathbb{H}P^n)$ given by
\begin{sequation}\label{sec4+00}
\varphi=\begin{bmatrix}
e^{a_0z-\overline{a}_0\overline{z}},
&e^{a_1z-\overline{a}_1\overline{z}},
&\cdots,
&e^{a_{n}z-\overline{a}_{n}\overline{z}}
\end{bmatrix}^T,
\end{sequation}
where $a_k=e^{\texttt{i}\frac{2k\pi}{n+1}}~(k=0,1,\cdots,n)$
(the Clifford solution in $\mathbb{C}P^n$), or $a_k=e^{\texttt{i}\frac{k\pi}{n+1}}~(k=0,1,\cdots,n)$.
\label{thm3}
\end{theorem}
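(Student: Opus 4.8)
The plan is to descend to the horizontal lift, identify the ambient $\mathbb{C}P^m$ together with its isotropy order, convert the isotropy hypothesis into vanishing moment conditions on the Kenmotsu--Jensen--Liao exponents, and then exploit reality of the weights to force equal spacing before normalizing by $Sp(n+1)$. Concretely, I would first apply Theorem \ref{thm3-1} to obtain a totally real horizontal minimal lift $[s]$ of $\varphi$, linearly full in some $\mathbb{C}P^m$ with $n\le m\le 2n+1$. By Proposition \ref{prop3-1} we have $\underline\varphi_k=\underline f_k\oplus\texttt{j}\underline f_{-k}$, and since $\varphi$ has isotropy order $n$ the sequence \eqref{eq:34} is cyclic with $\underline\varphi_{n+1}=\underline\varphi_0$. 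Substituting the decomposition \eqref{prop3-1-eq6} and separating the two line summands reproduces exactly the alternative of Proposition \ref{prop3-2}: either $\underline f_{n+1}=\underline f_0$, so the $f$-sequence is $(n+1)$-periodic and $[s]$ is linearly full in $\mathbb{C}P^{n}$ ($m=n$, isotropy order $n$), or $\underline f_{n+1}=\texttt{j}\underline f_0$, so the $f$-sequence is $(2n+2)$-periodic and $[s]$ is linearly full in $\mathbb{C}P^{2n+1}$ ($m=2n+1$, isotropy order $2n+1$). In either branch the isotropy order of $[s]$ equals $m$.

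Next, by Theorem \ref{thm3-2} a unitary change brings $s$ to the form \eqref{eq3-13} in $\mathbb{C}P^m$, with exponents $a_i=e^{\texttt{i}\theta_i}$ and real weights $r_i=(\xi^i)^2>0$, $\sum_i r_i=1$. Since $\partial_z^{\,j}s$ has $i$-th entry $a_i^{\,j}s_i$ and $|s_i|^2=r_i$ is constant, one computes $\langle s,\partial_z^{\,j}s\rangle=\sum_i\bar a_i^{\,j}r_i$, so isotropy order $m$ is equivalent to the vanishing of the moments $\mu_j:=\sum_{i=0}^m a_i^{\,j}r_i$ for $j=1,\dots,m$. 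Encoding these in the generating function
\[
\sum_{i=0}^m\frac{r_i}{1-a_i\zeta}=\sum_{j\ge0}\mu_j\zeta^{\,j}=1+O(\zeta^{\,m+1}),
\]
and reading off the residue at $\zeta=a_i^{-1}$ yields the closed form $r_i=(-1)^m\prod_{l\ne i}a_l/(a_i-a_l)$. The decisive point is that these weights are real: imposing $\overline{r_i}=r_i$ (using $\bar a=a^{-1}$ on the unit circle) collapses to $a_i^{\,m+1}=(-1)^m\prod_l a_l$, a constant independent of $i$. Hence the $m+1$ distinct points $a_i$ are precisely the $(m+1)$-st roots of one fixed number, so they are equally spaced, and back-substitution gives $r_i=1/(m+1)$ for every $i$. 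Thus $[s]$ is the Clifford solution of $\mathbb{C}P^m$.

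It then remains to translate this back to $\varphi$ and normalize. When $m=n$ the exponents are $\{e^{\texttt{i}2k\pi/(n+1)}\}_{k=0}^n$; writing $V=\mathrm{span}_{\mathbb{C}}\{s,\partial_z s,\dots,\partial_z^{\,n}s\}\cong\mathbb{C}^{n+1}$, Proposition \ref{prop3-1} places the image of $\varphi$ inside the $\texttt{j}$-invariant space $V\oplus\texttt{j}V\cong\mathbb{H}^{n+1}$ with complex part $V$, so $\varphi$ lies in the totally geodesic $\mathbb{C}P^n\subset\mathbb{H}P^n$ determined by $V$; an element of $Sp(n+1)$ carrying $V$ to $\mathrm{span}\{\mathbf e_1,\mathbf e_3,\dots,\mathbf e_{2n+1}\}$ and fixing $\theta_0=0$ yields \eqref{sec4+00} with $a_k=e^{\texttt{i}2k\pi/(n+1)}$. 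When $m=2n+1$ the $2n+2$ exponents are $\{e^{\texttt{i}k\pi/(n+1)}\}_{k=0}^{2n+1}$, and since $\texttt{j}s=J_{n+1}\bar s$ sends the frequency $e^{\texttt{i}k\pi/(n+1)}$ to $-e^{-\texttt{i}k\pi/(n+1)}=e^{\texttt{i}(n+1-k)\pi/(n+1)}$, the operator $\texttt{j}$ pairs the frequency $k$ with $n+1-k$; the relations \eqref{prop3-1-eq5} together with $\langle s,\texttt{j}s\rangle=0$ let these pairs split $s$ into $n+1$ orthogonal quaternionic coordinates, and a symplectic frame adapted to the splitting exhibits $\varphi$ as a map into $\mathbb{C}P^n\subset\mathbb{H}P^n$ whose complex part carries the representatives $\{e^{\texttt{i}k\pi/(n+1)}\}_{k=0}^{n}$, i.e.\ \eqref{sec4+00} with $a_k=e^{\texttt{i}k\pi/(n+1)}$.

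I expect the main obstacle to be this second normalization: one must verify that the $\texttt{j}$-pairing of conjugate frequencies is realized by a single element of $Sp(n+1)$ carrying $s$ into the block pattern $(w_0,0,w_1,0,\dots,w_n,0)$, so that the twistor projection $\varphi=t\circ[s]$ actually lands in one totally geodesic $\mathbb{C}P^n$ rather than merely in $\mathbb{H}P^n$. This is exactly where the full strength of the relations \eqref{prop3-1-eq5} is needed, to guarantee that the paired frequencies admit a common symplectic basis and that the residual phase freedom can be fixed to achieve $\theta_0=0$; by contrast, the moment computation of the second paragraph is self-contained and is the heart of the argument.
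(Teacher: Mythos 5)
There is a genuine gap, and it sits exactly where the paper's proof does most of its work. From $\underline{\varphi}_{n+1}=\underline{\varphi}_0$ and the decomposition $\underline{\varphi}_k=\underline{f}_k\oplus\texttt{j}\underline{f}_{-k}$ you conclude that ``separating the two line summands'' forces either $\underline{f}_{n+1}=\underline{f}_0$ (so $m=n$) or $\underline{f}_{n+1}=\texttt{j}\underline{f}_0$ (so $m=2n+1$), and hence that the isotropy order of $[s]$ always equals $m$. This dichotomy is false: a line inside the $2$-plane $\underline{f}_0\oplus\texttt{j}\underline{f}_0$ need not be one of the two summands, and the paper must allow $f_{n+1}=af_0+b\texttt{j}f_0$ with $a\neq 0$ \emph{and} $b\neq 0$ (its equation \eqref{sec4+1}). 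That diagonal case is not vacuous --- it is realized when $n$ is even, where $[s]$ has isotropy order $n$ but is linearly full in $\mathbb{C}P^{2n+1}$ (the lift \eqref{w+4}; see the Remark following the theorem). In that situation your moment conditions $\mu_j=0$ hold only for $j=1,\dots,n$, not up to $j=2n+1$, so the generating-function/residue argument does not apply and you cannot conclude that $[s]$ is the Clifford solution of $\mathbb{C}P^{2n+1}$. Your proposal therefore neither rules out the intermediate possibilities $n<m<2n+1$ nor classifies the $m=2n+1$, isotropy-order-$n$ case; the paper handles both by deriving $a_j^{2n+2}=1$ from $f_{n+1}=\lambda f_{-(n+1)}$ and then running Vandermonde arguments on the antisymmetric matrix $W=U^TJ_{n+1}U$.

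A second, related shortfall: the reduction from unitary to symplectic congruence is the actual content of the theorem (the statement is ``up to a symplectic isometry''), and you defer it to a paragraph of expectations. The paper's mechanism is the matrix $W=U^TJ_{n+1}U$ and the set $G_W$: the horizontality relations $\langle\partial_z^k s,\texttt{j}s\rangle=\langle\partial_{\bar z}^k s,\texttt{j}s\rangle=0$ translate into the linear constraints \eqref{thm3-6} on the entries $w_{ij}$, whose solution forces the block form \eqref{thm3-6+1} and thereby both kills the range $n<m\leq 2n+1$ in the generic case and produces the explicit lifts \eqref{w+3}--\eqref{w+5}. None of this appears in your outline. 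On the positive side, your moment computation in the second paragraph is a clean, self-contained re-derivation of the Jensen--Liao rigidity (their Proposition 4.1, which the paper simply cites): when the isotropy order of a totally real flat $[s]$ equals the dimension of the $\mathbb{C}P^m$ it fills, the exponents must be equally spaced and the weights equal. That part is sound and arguably more transparent than a citation, but it only covers the two ``extreme'' branches and cannot substitute for the $W$-analysis that closes the remaining cases.
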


\begin{proof}
Let $\varphi:\mathbb{C}\rightarrow \mathbb{H}P^n$ be a linearly full totally real flat minimal surface
of isotropy order $n$
with the induced metric $\varphi^*ds^2_{\mathbb{H}P^n}=2dzd\overline{z}$.
From Theorem \ref{thm3-1} and Proposition \ref{prop3-2}, there exists a totally real horizontal minimal lift
$\underline{f}_0=[s]:\mathbb{C}\rightarrow \mathbb{C}P^{2n+1}$ of isotropy order $n$ or $2n+1$ with the induced metric $f_0^*ds^2_{\mathbb{C}P^{2n+1}}=2dzd\overline{z}$, such that
\begin{sequation}\label{eq000}
\underline{\varphi}=\underline{f}_0\oplus \texttt{j}\underline{f}_0.
\end{sequation}
Then by Theorem \ref{thm3-2}, there exists a unitary matrix $U\in U(2n+2)$ such that
$s=U{V}_0^{(m)}~(n\leq m\leq 2n+1)$
satisfying
$\left\langle \partial^k_z s,~s\right\rangle
=\left\langle \partial^k_z s,~\texttt{j}s\right\rangle
=\left\langle \partial^k_{\bar{z}} s,~\texttt{j}s\right\rangle
=0~(k=1,\cdots,n)$.
Here, $V_0^{(m)}$ is a column vector of order $2n+2$ as follows,
\begin{small}
$$V_0^{(m)}(z)=\begin{pmatrix}
e^{a_0z-\overline{a}_0\overline{z}}\xi^0,
&e^{a_1z-\overline{a}_1\overline{z}}\xi^1,
&\cdots,
&e^{a_mz-\overline{a}_m\overline{z}}\xi^m,
&0,
&\cdots,
&0
\end{pmatrix}^T.
$$
\end{small}
Since $\texttt{j}s=J_{n+1}\bar{s}$,
setting $W=U^TJ_{n+1}U=(w_{ij})$, we know $W$ is an anti-symmetric unitary matrix of order $2n+2$.
Then for any $k=1,\cdots,n$,
\begin{small}
\begin{eqnarray}
\sum_{j=0}^m a_j^kr_j=
\sum_{i,j=0}^m \left(w_{ij}\xi^i\xi^ja_j^k
e^{(a_i+a_j)z-(\overline{a}_i+\overline{a}_j)\overline{z}}\right)
=\sum_{i,j=0}^m \left(w_{ij}\xi^i\xi^j(-\bar{a}_j)^k
e^{(a_i+a_j)z-(\overline{a}_i+\overline{a}_j)\overline{z}}\right)
=0.
\label{thm3-6}
\end{eqnarray}
\end{small}

For a given $W$,
define a set
\begin{small}
$$G_W\triangleq \left\{U\in U(2n+2)| \
U^TJ_{n+1}U=W\right\}.
$$
\end{small}
The following can be easily
checked,\\
(i) $\forall ~A\in Sp(n+1), ~U\in G_W$, we have that $AU\in G_W$;
\\
(ii) $\forall ~U, V \in G_W, ~\exists ~A=UV^* \in Sp(n+1) ~s.t. ~U=AV$.

In the following we discuss $W$ in two cases of the isotropy order of $\underline{f}_0$
being $n$ or $2n+1$.

If the isotropy order of $\underline{f}_0$ is $n$, then by Proposition \ref{prop3-2},
\begin{equation}\label{sec4+1}
f_{n+1}=af_0+b\texttt{j}f_0,
~\texttt{j}f_{-(n+1)}=\lambda\left(-\bar{b}f_0+\bar{a}\texttt{j}f_0\right),
\end{equation}
where $\lambda$ is a unit complex and $a(\neq 0),b$ are complexes satisfying $|a|^2+|b|^2=1$.
It follows from \eqref{sec4+1} that
$f_{n+1}=\lambda f_{-(n+1)}$,
which implies by $f_{n+1}=\partial_z^{n+1}s$ and $f_{-(n+1)}=\partial_{\bar{z}}^{n+1}s$,
\begin{sequation}\label{sec4+3}
a_j^{n+1}e^{a_jz-\bar{a}_j\bar{z}}\xi^j
=\lambda(-\bar{a}_j)^{n+1}e^{a_jz-\bar{a}_j\bar{z}}\xi^j
~(j=0,1,\cdots,m).
\end{sequation}
Since $a_0=1$ and $a_i\neq a_j~(i\neq j)$, then from \eqref{sec4+3},
$a_j^{2n+2}=1~(j=0,1,\cdots,m)$.

In this case, if $n\leq m<2n+1$ or $m=2n+1$($n$ is odd), then we claim that $W$ belongs to the following type
\begin{sequation}\label{w+1}
W=\begin{pmatrix}
0_{(m+1)\times(m+1)} & *\\
* & *
\end{pmatrix}.
\end{sequation}
In fact if $\mathbf{\forall ~i,j=0,1,\cdots,m,~a_i+a_j\neq 0}$,
then we have $w_{ij}=0$ by \eqref{thm3-6}, which implies that $W$
is just the type of \eqref{w+1}.
If $\mathbf{\forall ~i,~\exists ~j~s.t.~a_i+a_j=0}$, then $m$ must be odd.
Set $m=2p+1$. Then, for any $i=0,1,\cdots,p$, without of generality, we may assume that
$a_i+a_{p+1+i}=0$.
Applying it in \eqref{thm3-6}, we obtain that for any $i=0,1,\cdots,m$,
$w_{ij}=0~(j\neq p+1+i~(\text{mod} ~m+1))$,
and
$\sum_{i=0}^pw_{i,p+1+i}\xi^i\xi^{p+1+i}a_i^{2\tau+1}
=\sum_{i=0}^pw_{i,p+1+i}\xi^i\xi^{p+1+i}\bar{a}_i^{2\tau+1}
=0
~(\tau=0,1,\cdots,q)$,
where $2q+1$ is the maximum odd number not greater than $n$.
It follows that
\begin{small}
$$
\begin{pmatrix}
a_0 & a_1 & \cdots & a_p\\
a_0^3 & a_1^3 & \cdots & a_p^3\\
\vdots & \vdots & \ddots & \vdots\\
a_0^{2q+1} & a_1^{2q+1} & \cdots & a_p^{2q+1}\\
\bar{a}_0 & \bar{a}_1 & \cdots & \bar{a}_p\\
\bar{a}_0^3 & \bar{a}_1^3 & \cdots & \bar{a}_p^3\\
\vdots & \vdots & \ddots & \vdots\\
\bar{a}_0^{2q+1} & \bar{a}_1^{2q+1} & \cdots & \bar{a}_p^{2q+1}\\
\end{pmatrix}
\begin{pmatrix}
w_{0,p+1}\xi^0\xi^{p+1}\\
w_{1,p+2}\xi^1\xi^{p+2}\\
\vdots\\
w_{p,2p+1}\xi^p\xi^{2p+1}
\end{pmatrix}
=\begin{pmatrix}
0\\
0\\
\vdots\\
0
\end{pmatrix}.
$$
\end{small}
Since $a_j^{2n+2}=1$, then $\bar{a}_j=a_j^{2n+1}$. It means that the coefficient matrix of the above equation is a Vandermonde matrix of $(2q+2)\times(p+1)$, whose rank is $p+1$.
Here $2q+2\geq p+1$ by $n\leq m<2n+1$ or $m=2n+1$($n$ is odd).
So, we have $w_{i,p+1+i}=0~(i=0,1,\cdots,p)$.
Thus $W$ is also just the type of \eqref{w+1}.
If $\mathbf{\exists ~i~s.t.\forall ~j,~a_i+a_j\neq 0
~and ~\exists~k,l~s.t.~a_k+a_l=0}$,
by the similar discussion as the above, we get $W$ is also just the type of \eqref{w+1}.

If $n<m\leq 2n+1$ then $W$ in \eqref{w+1} is a degenerate matrix, hence there doesn't exist this case. Thus $m=n$ and
\begin{sequation}\label{thm3-6+1}
W=\begin{pmatrix}
0_{(n+1)\times(n+1)} & W_{12}\\
* & *
\end{pmatrix}
~\text{with}
~W_{12}=\begin{pmatrix}
w_{0,n+1} & \cdots & w_{0,2n+1}\\
\vdots & \ddots & \vdots\\
w_{n,n+1} & \cdots & w_{n,2n+1}
\end{pmatrix}.
\end{sequation}
From \eqref{thm3-6+1}, the corresponding matrix $U\in G_W$ can be expressed as follow:
\begin{small}
$$
U=\begin{pmatrix}
1 & 0 & \cdots & 0 & 0 & 0 & \cdots & 0\\
0 & 0 & \cdots & 0 & -w_{0,n+1} & -w_{0,n+2} & \cdots & -w_{0,2n+1}\\
\vdots & \vdots & \ddots & \vdots & \vdots & \vdots & \ddots & \vdots\\
0 & 0 & \cdots & 1 & 0 & 0 & \cdots & 0\\
0 & 0 & \cdots & 0 & -w_{n,n+1} & -w_{n,n+2} & \cdots & -w_{n,2n+1}\\
\end{pmatrix}.
$$
\end{small}
Then we get the horizontal lift of $\varphi$
\begin{sequation}\label{w+3}
s(z)=UV_0^{(n)}(z)=\begin{pmatrix}
e^{a_0z-\overline{a}_0\overline{z}}\xi^0,
&0,
&e^{a_1z-\overline{a}_1\overline{z}}\xi^1,
&0,
&\cdots,
&e^{a_nz-\overline{a}_n\overline{z}}\xi^n,
&0
\end{pmatrix}^T,
\end{sequation}
which implies
$$
\varphi=\begin{bmatrix}
e^{a_0z-\overline{a}_0\overline{z}}\xi^0,
&e^{a_1z-\overline{a}_1\overline{z}}\xi^1,
&\cdots,
&e^{a_nz-\overline{a}_n\overline{z}}\xi^n
\end{bmatrix}^T.
$$
So, the image of $\varphi$ lies in $\mathbb{C}P^n$.
Since the isotropy order of $\varphi$ is $n$, then it follows from (\cite{Jensen-Liao 1995},~Proposition 4.1) that
$\varphi$ is just the Clifford solution of \eqref{sec4+00} in $\mathbb{C}P^n$, up to $U(n+1)$ ($\subset Sp(n+1)$).

Now we discuss the case of $m=2n+1$($n$ is even). Noticing $2q+2<n+1$ in this case, we can't follow the above arguments. Since $a_j^{2n+2}=1$ and $a_j=e^{\texttt{i}\theta_j}$ with $0=\theta_0<\theta_1<\cdots <\theta_{2n+1}<2\pi$, then $a_j=e^{\texttt{i}\frac{j\pi}{n+1}}~(j=0,1,\cdots,2n+1)$,
which implies $a_{\tau}+a_{n+1+\tau}=0~(\tau=0,1,\cdots,n)$.
Using $f_{n+1}=a f_0+b\texttt{j}f_0$ again, we obtain
$
(a_i^{n+1}-a)e^{a_iz-\bar{a}_i\bar{z}}\xi^i
=b\sum_{j=0}^{2n+1}\bar{w}_{ij}e^{\bar{a}_j\bar{z}-a_jz}\xi^j
~(i=0,1,\cdots,2n+1)
$, which implies that for any $i=0,1,\cdots,2n+1$,
$w_{ij}=0~(j\neq n+1+i~(\text{mod} ~2n+2))$, and
\begin{sequation}\label{w+2}
(a_\tau^{n+1}-a)\xi^\tau=b\bar{w}_{\tau,n+1+\tau}\xi^{n+1+\tau},
~(a_{\tau}^{n+1}+a)\xi^{n+1+\tau}
=b\bar{w}_{\tau,n+1+\tau}\xi^{\tau}~(\tau=0,1,\cdots,n).
\end{sequation}
Set $n=2p$, from \eqref{w+2} we get $a=\frac{r_0-r_{n+1}}{r_0+r_{n+1}}
=(-1)^{\tau}\frac{r_\tau-r_{n+1+\tau}}{r_\tau+r_{n+1+\tau}}
~(\tau=1,\cdots,n)$,
which implies
$\frac{r_{n+1}}{r_0}=\frac{r_{n+1+2\zeta}}{r_{2\zeta}}
=\frac{r_{2\zeta-1}}{r_{n+2\zeta}}~(\zeta=1,\cdots,p)$, where $r_0\neq r_{n+1}$ by $a\neq 0$.
Substituting it into \eqref{w+2} and the first equation of \eqref{thm3-6} respectively, a straightforward calculation shows ${w}_{\tau,n+1+\tau}=(-1)^\tau w_{0,n+1}~(\tau=1,\cdots,n)$
and
$
r_0+\sum_{\tau=1}^{n}\left(a_{2\tau}^kr_{2\tau}\right)=0~(k=1,\cdots,n),
$
which implies
by the fact of $\sum_{\tau=1}^n a_{2\tau}^k=-1$,
\begin{small}
$$
\begin{pmatrix}
a_2 & a_{4} & \cdots & a_{2n}\\
a_2^2 & a_{4}^2 & \cdots & a_{2n}^2\\
\vdots & \vdots & \ddots & \vdots\\
a_2^{n} & a_{4}^{n} & \cdots & a_{2n}^{n}
\end{pmatrix}
\begin{pmatrix}
r_2\\
r_4\\
\vdots\\
r_{2n}
\end{pmatrix}
=\begin{pmatrix}
a_2 & a_{4} & \cdots & a_{2n}\\
a_2^2 & a_{4}^2 & \cdots & a_{2n}^2\\
\vdots & \vdots & \ddots & \vdots\\
a_2^{n} & a_{4}^{n} & \cdots & a_{2n}^{n}
\end{pmatrix}
\begin{pmatrix}
r_0\\
r_0\\
\vdots\\
r_0
\end{pmatrix}.
$$
\end{small}
Since the determinant of coefficient matrix of the above equation is
$a_2a_4\cdots a_{2n}\prod_{1\leq j<i\leq n}(a_{2i}-a_{2j})\neq 0$,
we have $r_{2\tau}=r_0$,
which implies $r_{2\tau+1}=r_1~(\tau=1,\cdots,n)$.
It follows that
\begin{small}
\begin{equation*}\label{thm3-6+1+1}
W=\begin{pmatrix}
0_{(n+1)\times(n+1)} & W_{12}\\
* & *
\end{pmatrix}
~\text{with}
~W_{12}=\begin{pmatrix}
(-1)^0w_{0,n+1} & \cdots & 0\\
\vdots & \ddots & \vdots\\
0 & \cdots & (-1)^nw_{0,n+1}
\end{pmatrix},
\end{equation*}
\end{small}
where $|w_{0,n+1}|^2=1$. By the similar calculation we can get the horizontal lift of $\varphi$
\begin{tiny}
\begin{seqnarray}\label{w+4}
s(z)=\left(
e^{a_0z-\overline{a}_0\overline{z}}\xi^0,
(-1)^1w_{0,n+1}e^{a_{n+1}z-\overline{a}_{n+1}\overline{z}}
\xi^{n+1},
\cdots,
e^{a_nz-\overline{a}_n\overline{z}}\xi^n,
(-1)^{n+1}w_{0,n+1}e^{a_{2n+1}z-\overline{a}_{2n+1}\overline{z}}
\xi^{2n+1}
\right)^T,
\end{seqnarray}
\end{tiny}
which implies by $a_\tau+a_{n+1+\tau}=0,~\xi^{2\tau}=\xi^0,~\xi^{2\tau+1}=\xi^1
~(\tau=0,1,\cdots,n)$,
\begin{tiny}
\begin{equation*}\label{sec4+0}
\varphi=\left[
(\xi^0+(-1)w_{0,n+1}\xi^1\texttt{j})e^{a_0z-\overline{a}_0\overline{z}},
(\xi^1+(-1)^2w_{0,n+1}\xi^0\texttt{j})e^{a_1z-\overline{a}_1\overline{z}},
\cdots,
(\xi^0+(-1)^{n+1}w_{0,n+1}\xi^1\texttt{j})e^{a_{n}z-\overline{a}_{n}\overline{z}}
\right]^T.
\end{equation*}
\end{tiny}
After removing the projective factor $(\xi^0-w_{0,n+1}\xi^1\texttt{j})$, $\varphi$ is the surface \eqref{sec4+00} with $a_k=e^{\texttt{i}\frac{k\pi}{n+1}}$ and
$n$ being even, up to $Sp(n+1)$.

If the isotropy order of $\underline{f}_0$ is $2n+1$, then $\underline{f}_0$ is
the Clifford solution (up to congruence) in $\mathbb{C}P^{2n+1}$ by (\cite{Jensen-Liao 1995},~Proposition 4.1),
that is, $s=U{V}_0^{(2n+1)}$ with
\begin{small}
$$V_0^{(2n+1)}(z)=\begin{pmatrix}
e^{a_0z-\overline{a}_0\overline{z}}\xi^0,
&e^{a_1z-\overline{a}_1\overline{z}}\xi^1,
&\cdots,
&e^{a_{2n+1}z-\overline{a}_{2n+1}\overline{z}}\xi^{2n+1}
\end{pmatrix}^T,
$$
\end{small}
where $a_k=e^{\texttt{i}\frac{k\pi}{n+1}},~\xi^k=\sqrt{\frac{1}{2n+2}}~(k=0,1,\cdots,2n+1)$.
Since $\underline{f}_{n+1}=\texttt{j}\underline{f}_{0}$, then
\begin{small}
\begin{equation*}\label{thm3-6+1+1}
W=\begin{pmatrix}
0_{(n+1)\times(n+1)} & W_{12}\\
* & *
\end{pmatrix}
~\text{with}
~W_{12}=\begin{pmatrix}
e^{\texttt{i}\pi n} & \cdots & 0\\
\vdots & \ddots & \vdots\\
0 & \cdots & e^{\texttt{i}\pi 0}
\end{pmatrix}.
\end{equation*}
\end{small}
By the similar computation we can get the horizontal lift of $\varphi$
\begin{small}
\begin{eqnarray}\label{w+5}
s(z)=\left(
e^{a_0z-\overline{a}_0\overline{z}}\xi^0,
(-1)^{n+1}e^{a_{n+1}z-\overline{a}_{n+1}\overline{z}}
\xi^{n+1},
\cdots,
e^{a_nz-\overline{a}_n\overline{z}}\xi^n,
(-1)e^{a_{2n+1}z-\overline{a}_{2n+1}\overline{z}}
\xi^{2n+1}
\right)^T,
\end{eqnarray}
\end{small}
which implies by $a_k=e^{\texttt{i}\frac{k\pi}{n+1}}$
and $\xi^k=\sqrt{\frac{1}{2n+2}}$,
\begin{small}
\begin{equation*}\label{sec4+0}
\varphi=\begin{bmatrix}
(1+(-1)^{n+1}\texttt{j})e^{a_0z-\overline{a}_0\overline{z}}\xi^0,
&(1+(-1)^{n}\texttt{j})e^{a_1z-\overline{a}_1\overline{z}}\xi^1,
&\cdots,
&(1+(-1)^{1}\texttt{j})e^{a_{n}z-\overline{a}_{n}\overline{z}}\xi^{n}
\end{bmatrix}^T.
\end{equation*}
\end{small}
After removing the projective factors $\xi^k$ and $(1+\texttt{j})$ or $(1-\texttt{j})$, $\varphi$ is the surface \eqref{sec4+00} with $a_k=e^{\texttt{i}\frac{k\pi}{n+1}}$, up to $Sp(n+1)$.
In summary, we finish our proofs.
\end{proof}
\begin{remark}
Two surfaces of isotropy order $n$ in $\mathbb{H}P^n$ given by Theorem \ref{thm3} have different horizontal lifts.
For the surface \eqref{sec4+00} with $a_k=e^{\texttt{i}\frac{2k\pi}{n+1}}$, the horizontal lift
\eqref{w+3} is not linearly full in $\mathbb{C}P^{2n+1}$, whose isotropy order is $n$.
However for the surface \eqref{sec4+00} with $a_k=e^{\texttt{i}\frac{k\pi}{n+1}}$, the horizontal lifts
\eqref{w+4}($n$ is even) and \eqref{w+5} are both linearly full in $\mathbb{C}P^{2n+1}$, whose isotropy order is $n$ and $2n+1$ respectively.
\end{remark}

{\bf{Acknowledgments}}~
The authors would like to appreciate the referee for carefully reading the manuscript and pointing out a mistake in the proof
of Proposition 3.5.
The authors were supported by NSF in China (No. 11501548, 11501505).

\end{document}